%
\documentclass[12pt, reqno]{amsart}
\usepackage{amsmath, amsthm, amscd, amsfonts, amssymb, graphicx, color}
\usepackage[bookmarksnumbered, colorlinks, plainpages]{hyperref}

\textheight 22.5truecm \textwidth 14.5truecm
\setlength{\oddsidemargin}{0.35in}\setlength{\evensidemargin}{0.35in}

\setlength{\topmargin}{-.5cm}

\newtheorem{theorem}{Theorem}[section]

\theoremstyle{definition}

\theoremstyle{remark}
\newtheorem{remark}[theorem]{Remark}
\numberwithin{equation}{section}


\begin{document}
\setcounter{page}{1}


\title[On the two-dimensional moment problem]{On the two-dimensional moment problem}

\author[S. Zagorodnyuk]{Sergey Zagorodnyuk$^1$$^{*}$}






\keywords{moment problem, Hilbert space, linear equation.}

\date{Received: xxxxxx; Revised: yyyyyy; Accepted: zzzzzz.
\newline \indent $^{*}$ Corresponding author}

\begin{abstract}
In this paper we obtain an algorithm towards solving the two-dimensional moment problem.
This algorithm gives the necessary and sufficient conditions for the solvability of the moment problem.
It is shown that all solutions of the moment problem can be constructed using this algorithm.
In a consequence, analogous results are obtained for the complex moment problem.
\end{abstract} \maketitle

\section{Introduction and preliminaries}

In this paper we analyze the two-dimensional moment problem. Recall that this problem consists of
finding a non-negative Borel measure $\mu$ in $\mathbb{R}^2$
such that
\begin{equation}
\label{f1_1}
\int_{\mathbb{R}^2} x_1^m x_2^n d\mu = s_{m,n},\qquad m,n\in \mathbb{Z}_+,
\end{equation}
where $\{ s_{m,n} \}_{m,n\in \mathbb{Z}_+}$ is a prescribed sequence of complex numbers.

The two-dimensional moment problem and the (closely related to this subject) complex moment problem have
an extensive literature, see books~\cite{cit_10000_ST},
\cite{cit_20000_Akh},~\cite{cit_30000_B}, surveys~\cite{cit_40000_F},\cite{cit_50000_B} and~\cite{cit_60000_S}.
Some conditions of solvability for this moment problem were obtained by Kilpi and by Stochel and
Szafraniec, see e.g.~\cite{cit_20000_Akh} and \cite{cit_60000_S}. However, these conditions are hard to check.
Putinar and Vasilescu derived conditions of solvability and a description of all solutions
by means of a dimensional extension~\cite{cit_PuVa} (even for the $N$-dimensional moment problem).
The two-dimensional moment problem is solvable if and only if the prescribed sequence of moments can be
extended to a sequence $\{ s_{m,n,k} \}_{m,n,k\in \mathbb{Z}_+}$, satisfying some easy conditions
(including the positivity condition).
This extended sequence is the moment sequence for an extended moment problem:
\begin{equation}
\label{f1_1_1}
\int_{\mathbb{R}^2} x_1^m x_2^n \frac{1}{ \left( 1+x_1^2+x_2^2 \right)^k } d\mu = s_{m,n,k},\qquad m,n,k\in \mathbb{Z}_+.
\end{equation}
The unique solution of the moment problem~(\ref{f1_1_1}) provides a solution of the two-dimensional
moment problem. In this way all different extensions define all different solutions of the
two-dimensional moment problem.
However, it is not clear whether such extensions exist and what is
a procedure for the construction of extensions
$\{ s_{m,n,k} \}_{m,n,k\in \mathbb{Z}_+}$.

\noindent
The method of our investigation uses an abstract operator approach, see~\cite{cit_70000_Z}.
Firstly, we obtain a solvability criterion for an auxiliary extended two-dimensional moment problem.
This moment problem is somewhat similar to the moment problem~(\ref{f1_1_1}) but we do not see
any direct relationship.
It is shown that the extended two-dimensional moment problem is always
determinate and its solution can be constructed explicitly.

An idea of our algorithm is to extend the symmetric operators related to the two-dimensional moment problem,
not "entirely", but on a discrete set of points.
It is shown that all solutions of the moment problem~(\ref{f1_1}) can be constructed on this way.
Roughly speaking, the final algorithm reduces to the solving of finite and infinite linear systems of equations with
parameters.

\noindent
In a consequence, analogous results are obtained for the complex moment problem.

{\bf Notations. } As usual, we denote by $\mathbb{R},\mathbb{C},\mathbb{N},\mathbb{Z},\mathbb{Z}_+$
the sets of real numbers, complex numbers, positive integers, integers and non-negative integers,
respectively. The real plane will be denoted by $\mathbb{R}^2$.
For a subset $S$ of $\mathbb{R}^2$ we denote by $\mathfrak{B}(S)$ the set of all Borel subsets of $S$.
Everywhere in this paper, all Hilbert spaces are assumed to be separable. By
$(\cdot,\cdot)_H$ and $\| \cdot \|_H$ we denote the scalar product and the norm in a Hilbert space $H$,
respectively. The indices may be omitted in obvious cases.
For a set $M$ in $H$, by $\overline{M}$ we mean the closure of $M$ in the norm $\| \cdot \|_H$. For
$\{ x_{m,n} \}_{m,n\in \mathbb{Z}_+}$, $x_{m,n}\in H$, we write
$\mathop{\rm Lin}\nolimits \{ x_{m,n} \}_{m,n\in \mathbb{Z}_+}$ for the set of linear combinations of elements
$\{ x_{m,n} \}_{m,n\in \mathbb{Z}_+}$
and $\mathop{\rm span}\nolimits \{ x_{m,n} \}_{m,n\in \mathbb{Z}_+} =
\overline{ \mathop{\rm Lin}\nolimits \{ x_{m,n} \}_{m,n\in \mathbb{Z}_+} }$.
The identity operator in $H$ is denoted by $E_H$. For an arbitrary linear operator $A$ in $H$,
the operators $A^*$,$\overline{A}$,$A^{-1}$ mean its adjoint operator, its closure and its inverse
(if they exist). By $D(A)$ and $R(A)$ we mean the domain and the range of the operator $A$.
The norm of a bounded operator $A$ is denoted by $\| A \|$.
By $P^H_{H_1} = P_{H_1}$ we mean the operator of orthogonal projection in $H$ on a subspace
$H_1$ in $H$.
By $L^2_\mu$ we denote the usual space of square-integrable complex functions $f(x_1,x_2)$,
$x_1,x_2\in \mathbb{R}^2$, with respect to the Borel measure $\mu$ in $\mathbb{R}^2$.

\section{The solution of an extended two-dimensional moment problem.}
Consider the following moment problem: to find a non-negative Borel measure $\mu$ in $\mathbb{R}^2$
such that
$$ \int_{\mathbb{R}^2} x_1^m (x_1+i)^k (x_1-i)^l  x_2^n (x_2+i)^r (x_2-i)^t d\mu = u_{m,k,l;n,r,t},\qquad $$
\begin{equation}
\label{f2_1}
m,n\in \mathbb{Z}_+,\ k,l,r,t \in \mathbb{Z},
\end{equation}
where $\{ u_{m,k,l;n,r,t} \}_{m,n\in \mathbb{Z}_+,k,l,r,t \in \mathbb{Z}}$ is a prescribed
sequence of complex numbers.
This problem is said to be {\bf the extended two-dimensional moment problem}.

\noindent
We set
$$ \Omega = \{ (m,k,l;n,r,t):\ m,n\in \mathbb{Z}_+,\ k,l,r,t \in \mathbb{Z} \}, $$
$$ \Omega_0 = \{ (m,k,l;n,r,t):\ m,n\in \mathbb{Z}_+,\ k,l,r,t \in \mathbb{Z},\ k=l=r=t=0 \}, $$
$$ \Omega' = \Omega \backslash \Omega_0. $$
Let the moment problem~(\ref{f2_1}) have a solution $\mu$. Choose an arbitrary function
$$ P(x_1,x_2) = \sum_{(m,k,l;n,r,t)\in \Omega} \alpha_{m,k,l;n,r,t} x_1^m (x_1+i)^k (x_1-i)^l
x_2^n (x_2+i)^r (x_2-i)^t, $$
where all but finite number of complex coefficients $\alpha_{m,k,l;n,r,t}$ are zeros. Then
$$ 0 \leq \int_{\mathbb{R}^2} |P(x_1,x_2)|^2 d\mu =
\sum_{(m,k,l;n,r,t),(m',k',l';n',r',t')\in \Omega} \alpha_{m,k,l;n,r,t}
\overline{ \alpha_{m',k',l';n',r',t'} } $$
$$ * \int_{\mathbb{R}^2}
x_1^{m+m'} (x_1+i)^{k+l'} (x_1-i)^{l+k'}  x_2^{n+n'} (x_2+i)^{r+t'} (x_2-i)^{t+r'}
d\mu $$
$$ = \sum_{(m,k,l;n,r,t),(m',k',l';n',r',t')\in \Omega} \alpha_{m,k,l;n,r,t}
\overline{ \alpha_{m',k',l';n',r',t'} } u_{m+m',k+l',l+k';n+n',r+t',t+r'}. $$
Therefore
\begin{equation}
\label{f2_2}
\sum_{(m,k,l;n,r,t),(m',k',l';n',r',t')\in \Omega} \alpha_{m,k,l;n,r,t}
\overline{ \alpha_{m',k',l';n',r',t'} } u_{m+m',k+l',l+k';n+n',r+t',t+r'} \geq 0,
\end{equation}
for arbitrary complex coefficients $\alpha_{m,k,l;n,r,t}$, where
all but finite number of $\alpha_{m,k,l;n,r,t}$ are zeros. The latter condition on the coefficients
$\alpha_{m,k,l;n,r,t}$ in infinite sums will be assumed in similar situations.

We shall use the following important fact (e.g.~\cite[pp.361-363]{cit_80000_AG}).
\begin{theorem}
\label{t2_1}
Let a sequence of complex numbers $\{ u_{m,k,l;n,r,t} \}_{(m,k,l;n,r,t)\in \Omega}$
satisfy condition~(\ref{f2_2}).
Then there exist a separable Hilbert space $H$ with a scalar product $(\cdot,\cdot)_H$ and
a sequence $\{ x_{m,k,l;n,r,t} \}_{(m,k,l;n,r,t)\in \Omega}$ in $H$, such that
$$ ( x_{m,k,l;n,r,t}, x_{m',k',l';n',r',t'} )_H = u_{m+m',k+l',l+k';n+n',r+t',t+r'}, $$
\begin{equation}
\label{f2_3}
(m,k,l;n,r,t),(m',k',l';n',r',t')\in \Omega,
\end{equation}
and $\mathop{\rm span}\nolimits\{ x_{m,k,l;n,r,t} \}_{(m,k,l;n,r,t)\in \Omega} = H$.
\end{theorem}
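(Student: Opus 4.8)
The plan is to carry out the standard Gelfand--Naimark--Segal (reproducing kernel) construction, adapted to the twisted index addition that appears in~(\ref{f2_3}). For $\omega = (m,k,l;n,r,t)$ and $\omega' = (m',k',l';n',r',t')$ in $\Omega$, introduce the shorthand $\omega \dotplus \omega' := (m+m',\, k+l',\, l+k';\, n+n',\, r+t',\, t+r')$, so that condition~(\ref{f2_2}) reads $\sum_{\omega,\omega'\in\Omega} \alpha_\omega \overline{\alpha_{\omega'}}\, u_{\omega \dotplus \omega'} \ge 0$ for all finitely supported $\alpha\colon \Omega\to\mathbb{C}$. Let $\mathcal{L}_0$ denote the complex vector space of all such finitely supported functions, with canonical basis $\{ e_\omega \}_{\omega\in\Omega}$, $e_\omega(\omega'') = \delta_{\omega,\omega''}$, and on $\mathcal{L}_0$ define the sesquilinear form
$$ [\alpha, \beta] := \sum_{\omega, \omega' \in \Omega} \alpha_\omega\, \overline{\beta_{\omega'}}\; u_{\omega \dotplus \omega'} . $$

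First I would verify that $[\cdot,\cdot]$ is a non-negative Hermitian form. Non-negativity of $[\alpha,\alpha]$ is precisely~(\ref{f2_2}). For Hermitian symmetry, fix $\omega_1\neq\omega_2$ and apply~(\ref{f2_2}) to $\alpha = e_{\omega_1} + \zeta e_{\omega_2}$, $\zeta\in\mathbb{C}$: this forces $\zeta\, u_{\omega_1 \dotplus \omega_2} + \overline{\zeta}\, u_{\omega_2 \dotplus \omega_1}$ to be real for every $\zeta$, and taking $\zeta = 1$ and $\zeta = i$ yields $u_{\omega_2 \dotplus \omega_1} = \overline{u_{\omega_1 \dotplus \omega_2}}$; the choice $\alpha = e_\omega$ gives $u_{\omega \dotplus \omega}\in\mathbb{R}$. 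Hence the kernel $(\omega,\omega')\mapsto u_{\omega\dotplus\omega'}$ is Hermitian, and so is $[\cdot,\cdot]$.

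Next I would pass to the associated Hilbert space. Put $\mathcal{N} := \{ \alpha\in\mathcal{L}_0 : [\alpha,\alpha] = 0 \}$; by the Cauchy--Schwarz inequality for non-negative Hermitian forms, $\mathcal{N} = \{ \alpha : [\alpha,\beta] = 0 \ \text{for all}\ \beta\in\mathcal{L}_0 \}$, so $\mathcal{N}$ is a linear subspace and $[\cdot,\cdot]$ descends to a positive-definite inner product on $\mathcal{L}_0/\mathcal{N}$. Let $H$ be the completion of $\mathcal{L}_0/\mathcal{N}$ with respect to this inner product, with scalar product $(\cdot,\cdot)_H$; since $\Omega$ is countable, $\mathcal{L}_0/\mathcal{N}$ has at most countable algebraic dimension, so $H$ is separable. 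Define $x_{m,k,l;n,r,t}\in H$ to be the image of $e_{(m,k,l;n,r,t)}$ under $\mathcal{L}_0 \to \mathcal{L}_0/\mathcal{N} \hookrightarrow H$. Then $(x_\omega, x_{\omega'})_H = [e_\omega, e_{\omega'}] = u_{\omega\dotplus\omega'}$, which is~(\ref{f2_3}), while $\mathop{\rm Lin}\nolimits\{ x_\omega \}_{\omega\in\Omega}$ is the image of $\mathcal{L}_0$ in $H$, dense by construction, so $\mathop{\rm span}\nolimits\{ x_\omega \}_{\omega\in\Omega} = H$.

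The only step that is not purely routine is extracting Hermitian symmetry of the kernel from~(\ref{f2_2}), which is stated merely as a non-negativity (hence a priori only a reality) condition; the two-point, two-phase test above resolves this. The remainder is the usual quotient-and-complete argument, with separability handed to us for free by the countability of $\Omega$.
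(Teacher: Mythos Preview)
Your argument is correct and is essentially the paper's own proof: the paper likewise takes the free linear span of symbols indexed by $\Omega$, equips it with the sesquilinear form built from $u_{\omega\dotplus\omega'}$, then factors out the null space and completes. You supply two details the paper leaves implicit---the derivation of Hermitian symmetry of the kernel from the bare positivity assumption~(\ref{f2_2}), and the separability of $H$ via countability of $\Omega$---but the route is the same.
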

\begin{proof}
(We do not claim the originality of the idea of this proof).
Choose an arbitrary infinite-dimensional linear vector space $V$ (for instance, one
may choose the space of all complex sequences $(u_n)_{n\in \mathbb{N}}$, $u_n\in \mathbb{C}$).
Let $X = \{ x_{m,k,l;n,r,t} \}_{(m,k,l;n,r,t)\in \Omega}$ be an
arbitrary infinite sequence of linear independent elements
in $V$ which is indexed by elements  of $\Omega$.
Set $L_X = \mathop{\rm Lin}\nolimits\{ x_{m,k,l;n,r,t} \}_{(m,k,l;n,r,t)\in \Omega}$.
Introduce the following functional:
$$ [x,y] = \sum_{(m,k,l;n,r,t),(m',k',l';n',r',t')\in \Omega} \alpha_{m,k,l;n,r,t}
\overline{\beta_{m',k',l';n',r',t'}} $$
\begin{equation}
\label{f2_4}
* u_{m+m',k+l',l+k';n+n',r+t',t+r'},
\end{equation}
for $x,y\in L_X$,
$$ x=\sum_{(m,k,l;n,r,t)\in\Omega} \alpha_{m,k,l;n,r,t} x_{m,k,l;n,r,t}, $$
$$ y=\sum_{(m',k',l';n',r',t')\in\Omega} \beta_{m',k',l';n',r',t'} x_{m',k',l';n',r',t'}, $$
where $\alpha_{m,k,l;n,r,t},\beta_{m',k',l';n',r',t'}\in \mathbb{C}$.
Here all but finite number of indices $\alpha_{m,k,l;n,r,t},\beta_{m',k',l';n',r',t'}$ are zeros.

\noindent
The set $L_X$ with $[\cdot,\cdot]$ will be a quasi-Hilbert space. Factorizing and making the completion
we obtain the  desired space $H$ (e.g. \cite{cit_30000_B}).
\end{proof}

Let the moment problem~(\ref{f2_1}) be given and the condition~(\ref{f2_2}) hold.
By Theorem~\ref{t2_1} there exist  a Hilbert space $H$ and a sequence
$\{ x_{m,k,l;n,r,t} \}_{(m,k,l;n,r,t)\in \Omega}$, in $H$, such that
relation~(\ref{f2_3}) holds.
Set $L = \mathop{\rm Lin}\nolimits\{ x_{m,k,l;n,r,t} \}_{(m,k,l;n,r,t)\in \Omega}$.
Introduce the following operators
\begin{equation}
\label{f2_5}
A_0 \sum_{(m,k,l;n,r,t)\in \Omega} \alpha_{m,k,l;n,r,t} x_{m,k,l;n,r,t} =
\sum_{(m,k,l;n,r,t)\in \Omega} \alpha_{m,k,l;n,r,t} x_{m+1,k,l;n,r,t},
\end{equation}
\begin{equation}
\label{f2_6}
B_0 \sum_{(m,k,l;n,r,t)\in \Omega} \alpha_{m,k,l;n,r,t} x_{m,k,l;n,r,t} =
\sum_{(m,k,l;n,r,t)\in \Omega} \alpha_{m,k,l;n,r,t} x_{m,k,l;n+1,r,t},
\end{equation}
where all but finite number of complex coefficients $\alpha_{m,k,l;n,r,t}$ are zeros.
Let us check that these definitions are correct.
Indeed, suppose that
\begin{equation}
\label{f2_7}
x = \sum_{(m,k,l;n,r,t)\in \Omega} \alpha_{m,k,l;n,r,t} x_{m,k,l;n,r,t} =
\sum_{(m',k',l';n',r',t')\in \Omega} \beta_{m',k',l';n',r',t'} x_{m',k',l';n',r',t'}.
\end{equation}
We may write
$$ \left( \sum_{(m,k,l;n,r,t)\in \Omega} \alpha_{m,k,l;n,r,t} x_{m+1,k,l;n,r,t}, x_{a,b,c;d,e,f} \right)_H $$
$$ = \sum_{(m,k,l;n,r,t)\in \Omega} \alpha_{m,k,l;n,r,t} u_{m+1+a,k+c,l+b;n+d,r+f,t+e} $$
$$= \sum_{(m,k,l;n,r,t)\in \Omega} \alpha_{m,k,l;n,r,t} (x_{m,k,l;n,r,t}, x_{a+1,b,c;d,e,f})_H $$
$$ = (x,x_{a+1,b,c;d,e,f})_H,\quad (a,b,c;d,e,f)\in \Omega. $$
In the same manner we obtain:
$$ \left( \sum_{(m',k',l';n',r',t')\in \Omega} \alpha_{m',k',l';n',r',t'} x_{m'+1,k',l';n',r',t'},
x_{a,b,c;d,e,f} \right)_H $$
$$ = (x,x_{a+1,b,c;d,e,f})_H,\quad (a,b,c;d,e,f)\in \Omega. $$
Therefore the  definition of $A_0$ is correct. The correctness of the definition of $B_0$ can be checked in
a similar manner.
Notice that
$$ (A_0 x_{m,k,l;n,r,t}, x_{a,b,c;d,e,f} )_H = ( x_{m+1,k,l;n,r,t}, x_{a,b,c;d,e,f} )_H $$
$$ = u_{m+1+a,k+c,l+b;n+d,r+f,t+e} = ( x_{m,k,l;n,r,t}, x_{a+1,b,c;d,e,f} )_H $$
$$ = ( x_{m,k,l;n,r,t}, A_0 x_{a,b,c;d,e,f} )_H,\quad (m,k,l;n,r,t),(a,b,c;d,e,f)\in\Omega. $$
Therefore $A_0$ is symmetric. The same argument implies that $B_0$ is symmetric, as well.

\noindent
Suppose that the following conditions hold:
$$ u_{m+1+a,k+c,l+b;n+d,r+f,t+e} + i u_{m+a,k+c,l+b;n+d,r+f,t+e} $$
\begin{equation}
\label{f2_8}
= u_{m+a,k+1+c,l+b;n+d,r+f,t+e},
\end{equation}
$$ u_{m+1+a,k+c,l+b;n+d,r+f,t+e} - i u_{m+a,k+c,l+b;n+d,r+f,t+e} $$
\begin{equation}
\label{f2_9}
= u_{m+a,k+c,l+1+b;n+d,r+f,t+e},
\end{equation}
$$ u_{m+a,k+c,l+b;n+1+d,r+f,t+e} + i u_{m+a,k+c,l+b;n+d,r+f,t+e} $$
\begin{equation}
\label{f2_10}
= u_{m+a,k+c,l+b;n+d,r+1+f,t+e},
\end{equation}
$$ u_{m+a,k+c,l+b;n+1+d,r+f,t+e} - i u_{m+a,k+c,l+b;n+d,r+f,t+e} $$
\begin{equation}
\label{f2_11}
= u_{m+a,k+c,l+b;n+d,r+f,t+1+e},
\end{equation}
for all $(m,k,l;n,r,t),(a,b,c;d,e,f)\in\Omega$.
These conditions are equivalent to conditions
$$ (x_{m+1,k,l;n,r,t} + i x_{m,k,l;n,r,t}, x_{a,b,c;d,e,f})_H  $$
\begin{equation}
\label{f2_12}
= (x_{m,k+1,l;n,r,t}, x_{a,b,c;d,e,f})_H,
\end{equation}
$$ (x_{m+1,k,l;n,r,t} - i x_{m,k,l;n,r,t}, x_{a,b,c;d,e,f})_H $$
\begin{equation}
\label{f2_13}
= (x_{m,k,l+1;n,r,t}, x_{a,b,c;d,e,f})_H,
\end{equation}
$$ (x_{m,k,l;n+1,r,t} + i x_{m,k,l;n,r,t}, x_{a,b,c;d,e,f})_H $$
\begin{equation}
\label{f2_14}
= (x_{m,k,l;n,r+1,t}, x_{a,b,c;d,e,f})_H,
\end{equation}
$$ (x_{m,k,l;n+1,r,t} - i x_{m,k,l;n,r,t}, x_{a,b,c;d,e,f})_H $$
\begin{equation}
\label{f2_15}
= (x_{m,k,l;n,r,t+1}, x_{a,b,c;d,e,f})_H,
\end{equation}
for all $(m,k,l;n,r,t),(a,b,c;d,e,f)\in\Omega$.
The latter conditions are equivalent to the following conditions:
\begin{equation}
\label{f2_16}
x_{m+1,k,l;n,r,t} + i x_{m,k,l;n,r,t} = x_{m,k+1,l;n,r,t},
\end{equation}
\begin{equation}
\label{f2_17}
x_{m+1,k,l;n,r,t} - i x_{m,k,l;n,r,t} = x_{m,k,l+1;n,r,t},
\end{equation}
\begin{equation}
\label{f2_18}
x_{m,k,l;n+1,r,t} + i x_{m,k,l;n,r,t} = x_{m,k,l;n,r+1,t},
\end{equation}
\begin{equation}
\label{f2_19}
x_{m,k,l;n+1,r,t} - i x_{m,k,l;n,r,t} = x_{m,k,l;n,r,t+1},
\end{equation}
for all $(m,k,l;n,r,t)\in\Omega$. The last conditions mean that
\begin{equation}
\label{f2_20}
(A_0 + i E_H) x_{m,k,l;n,r,t} = x_{m,k+1,l;n,r,t},
\end{equation}
\begin{equation}
\label{f2_21}
(A_0 - i E_H) x_{m,k,l;n,r,t} = x_{m,k,l+1;n,r,t},
\end{equation}
\begin{equation}
\label{f2_22}
(B_0 + i E_H) x_{m,k,l;n,r,t} = x_{m,k,l;n,r+1,t},
\end{equation}
\begin{equation}
\label{f2_23}
(B_0 - i E_H) x_{m,k,l;n,r,t} = x_{m,k,l;n,r,t+1},
\end{equation}
for all $(m,k,l;n,r,t)\in\Omega$.
The latter conditions imply that
$$ (A_0 \pm iE_H) L =  L,\quad (B_0 \pm iE_H) L =  L. $$
Therefore operators $A_0$ and $B_0$ are essentially self-adjoint.
The conditions~(\ref{f2_20})-(\ref{f2_23}) also imply that
\begin{equation}
\label{f2_24}
(A_0 + i E_H)^{-1} x_{m,k,l;n,r,t} = x_{m,k-1,l;n,r,t},
\end{equation}
\begin{equation}
\label{f2_25}
(A_0 - i E_H)^{-1} x_{m,k,l;n,r,t} = x_{m,k,l-1;n,r,t},
\end{equation}
\begin{equation}
\label{f2_26}
(B_0 + i E_H)^{-1} x_{m,k,l;n,r,t} = x_{m,k,l;n,r-1,t},
\end{equation}
\begin{equation}
\label{f2_27}
(B_0 - i E_H)^{-1} x_{m,k,l;n,r,t} = x_{m,k,l;n,r,t-1},
\end{equation}
for all $(m,k,l;n,r,t)\in\Omega$.

Consider the Cayley transformations of $A_0$ and $B_0$:
\begin{equation}
\label{f2_28}
V_{A_0} = (A_0-iE_H)(A_0 + i E_H)^{-1},
\end{equation}
\begin{equation}
\label{f2_29}
V_{B_0} = (B_0-iE_H)(B_0 + i E_H)^{-1},\qquad D(A_0)=D(B_0) = L.
\end{equation}
By virtue of relations~(\ref{f2_21}),(\ref{f2_23}),(\ref{f2_24}),(\ref{f2_26}) we obtain:
$$ V_{A_0} V_{B_0} x_{m,k,l;n,r,t} = x_{m,k-1,l+1;n,r-1,t+1} = V_{B_0} V_{A_0} x_{m,k,l;n,r,t}, $$
for all $(m,k,l;n,r,t)\in\Omega$. Therefore
\begin{equation}
\label{f2_30}
V_{A_0} V_{B_0} x = V_{B_0} V_{A_0} x,\qquad x\in L.
\end{equation}
By continuity we extend the isometric operators $V_{A_0}$ and $V_{B_0}$ to unitary operators
$U_{A_0}$ and $V_{B_0}$ in $H$, respectively. By continuity we conclude that
\begin{equation}
\label{f2_31}
U_{A_0} U_{B_0} x = U_{B_0} U_{A_0} x,\qquad x\in H.
\end{equation}
Set $A=\overline{A_0}$, $B=\overline{B_0}$. The Cayley transformations of the self-adjoint operrators
$A$ and $B$ coincide on
$L$ with $U_{A_0}$ and $U_{B_0}$, respectively. Thus, the Cayley transformations of $A$ and $B$
are $U_{A_0}$ and $U_{B_0}$, respectively. Therefore, operators $A$ and $B$ commute.

Notice that
\begin{equation}
\label{f2_32}
x_{m,k,l;n,r,t} = A^m (A+i)^k (A-i)^l B^n (B+i)^r (B-i)^t x_{0,0,0;0,0,0},
\end{equation}
for all $(m,k,l;n,r,t)\in\Omega$.
In fact, by induction we can check that
$$ x_{m,k,l;n,r,t} = (B-i E_H)^t x_{m,k,l;n,r,0},\qquad t\in \mathbb{Z}, $$
for any fixed $m,n\in \mathbb{Z}_+$, $k,l,r\in \mathbb{Z}$;
$$ x_{m,k,l;n,r,0} = (B+i E_H)^r x_{m,k,l;n,0,0},\qquad r\in \mathbb{Z}, $$
for any fixed $m,n\in \mathbb{Z}_+$, $k,l\in \mathbb{Z}$;
$$ x_{m,k,l;n,0,0} = B^n x_{m,k,l;0,0,0},\qquad n\in \mathbb{Z}_+, $$
for any fixed $m\in \mathbb{Z}_+$, $k,l\in \mathbb{Z}$;
$$ x_{m,k,l;0,0,0} = (A-i E_H)^l x_{m,k,0;0,0,0},\qquad l\in \mathbb{Z}, $$
for any fixed $m\in \mathbb{Z}_+$, $k\in \mathbb{Z}$;
$$ x_{m,k,0;0,0,0} = (A+i E_H)^l x_{m,0,0;0,0,0},\qquad k\in \mathbb{Z}, $$
for any fixed $m\in \mathbb{Z}_+$;
$$ x_{m,0,0;0,0,0} = A^m x_{0,0,0;0,0,0},\qquad m\in \mathbb{Z}_+, $$
and then by substitution of each relation into previous one we obtain relation~(\ref{f2_32}).

For the commuting self-adjoint operators $A$ and $B$ there exists an orthogonal operator spectral
measure $E(x)$ on $\mathfrak{B}(\mathbb{R}^2)$ such that
\begin{equation}
\label{f2_33}
A = \int_{\mathbb{R}^2} x_1 dE(x),\quad B = \int_{\mathbb{R}^2} x_2 dE(x).
\end{equation}
Then
$$ u_{m,k,l;n,r,t} = (x_{m,k,l;n,r,t}, x_{0,0,0;0,0,0})_H $$
$$ =
\left( \int_{\mathbb{R}^2} x_1^m (x_1+i)^k (x_1-i)^l x_2^n (x_2+i)^r (x_2-i)^t dE(x) x_{0,0,0;0,0,0},
x_{0,0,0;0,0,0}   \right)_H $$
$$ = \int_{\mathbb{R}^2} x_1^m (x_1+i)^k (x_1-i)^l x_2^n (x_2+i)^r (x_2-i)^t d (E(x) x_{0,0,0;0,0,0},
x_{0,0,0;0,0,0} )_H. $$
Hence, the Borel measure
\begin{equation}
\label{f2_34}
\mu = (E(x) x_{0,0,0;0,0,0}, x_{0,0,0;0,0,0} )_H,
\end{equation}
is a solution of the moment problem~(\ref{f2_1}).

\begin{theorem}
\label{t2_2}
Let the extended two-dimensional moment problem~(\ref{f2_1}) be given. The moment problem has a solution
if and only if conditions~(\ref{f2_2}) and~(\ref{f2_8})-(\ref{f2_11}) are satisfied.
If these conditions are satisfied then the solution of the moment problem is unique and can
be constructed by~(\ref{f2_34}).
\end{theorem}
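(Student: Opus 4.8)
The plan is to assemble the two implications from the groundwork already laid out before the statement. For the "only if" direction, I would suppose $\mu$ is a solution of (\ref{f2_1}). The positivity condition (\ref{f2_2}) has already been derived in the text by expanding $0 \le \int |P|^2 d\mu$ and using the moment identities, so that half is done. For conditions (\ref{f2_8})–(\ref{f2_11}), I would simply observe that, say, $x_1^{m+1}(x_1+i)^{k+c}(x_1-i)^{l+b} + i x_1^m (x_1+i)^{k+c}(x_1-i)^{l+b} = x_1^m (x_1+i)^{k+1+c}(x_1-i)^{l+b}$ holds identically (the factor $x_1 + i$ appears once more on the right), multiply through by the remaining monomial in $x_2$, integrate against $\mu$, and read off (\ref{f2_8}); the other three are obtained by the symmetric manipulations with $x_1 - i$, $x_2 + i$, $x_2 - i$ in place.

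For the "if" direction and the construction, I would invoke everything developed after Theorem~\ref{t2_1}: given (\ref{f2_2}), Theorem~\ref{t2_1} furnishes the Hilbert space $H$ and the system $\{x_{m,k,l;n,r,t}\}$ satisfying (\ref{f2_3}); given in addition (\ref{f2_8})–(\ref{f2_11}), the chain of equivalences (\ref{f2_8})–(\ref{f2_11}) $\Leftrightarrow$ (\ref{f2_12})–(\ref{f2_15}) $\Leftrightarrow$ (\ref{f2_16})–(\ref{f2_19}) shows the operators $A_0, B_0$ of (\ref{f2_5})–(\ref{f2_6}) are symmetric with $(A_0 \pm iE_H)L = (B_0 \pm iE_H)L = L$, hence essentially self-adjoint; their closures $A = \overline{A_0}$, $B = \overline{B_0}$ commute by the Cayley-transform argument (\ref{f2_28})–(\ref{f2_31}); the joint spectral measure $E$ of (\ref{f2_33}) together with the key representation (\ref{f2_32}) then yields, via the computation just above the theorem, that $\mu = (E(\cdot)x_{0,0,0;0,0,0}, x_{0,0,0;0,0,0})_H$ solves (\ref{f2_1}). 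This is formula (\ref{f2_34}), so the existence and the construction claims are both established.

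It remains to prove uniqueness. Here I would argue that any solution $\nu$ of (\ref{f2_1}) gives rise, via $L^2_\nu$, to a model isomorphic to the one constructed: the map sending $x_{m,k,l;n,r,t}$ to the function $x_1^m(x_1+i)^k(x_1-i)^l x_2^n(x_2+i)^r(x_2-i)^t$ in $L^2_\nu$ is isometric by (\ref{f2_3}) and has dense range (polynomials in $x_1, x_2$ and in $(x_1\pm i)^{-1}, (x_2\pm i)^{-1}$ are dense in $L^2_\nu$ since $\nu$ is a finite measure and these generate the same closed subspace as the bounded Borel functions — more precisely, $(x_j \pm i)^{-1}$ are bounded continuous and the algebra they generate with the $x_j$ separates points and is closed under conjugation). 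Under this isomorphism, multiplication by $x_1$ and by $x_2$ in $L^2_\nu$ correspond exactly to $A$ and $B$, so their joint spectral measure is carried to the multiplication spectral measure on $L^2_\nu$, whence $\nu = (E(\cdot)x_{0,0,0;0,0,0}, x_{0,0,0;0,0,0})_H = \mu$. The main obstacle is the density claim — one must be careful that the functions $(x_j\pm i)^{-1}$, not just polynomials, are available and that the resulting algebra is genuinely dense in $L^2_\nu$; invoking Stone–Weierstrass on the one-point compactification (where $x_1^m(x_1+i)^k\cdots$ extend continuously) and then $L^2$-density of $C_0$ handles this cleanly, and essential self-adjointness of $A_0, B_0$ in the abstract model guarantees the correspondence of the self-adjoint operators rather than merely their symmetric restrictions.
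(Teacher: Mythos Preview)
Your argument for sufficiency and for the necessity of~(\ref{f2_2}) is exactly the paper's: both simply point back to the construction carried out before the theorem. For the necessity of~(\ref{f2_8})--(\ref{f2_11}) you take a more direct route than the paper. You integrate the pointwise identity $x_1\cdot f + i f = (x_1+i)f$ against $\mu$; the paper instead builds the unitary $W:H_\mu\to H$ of~(\ref{f2_43}), establishes the intertwining relations~(\ref{f2_45})--(\ref{f2_46}), and then reads off~(\ref{f2_20})--(\ref{f2_23}) from the multiplication-operator identities~(\ref{f2_39})--(\ref{f2_42}). Your way is shorter; the paper's detour is not wasted, though, since the operator $W$ reappears in Section~3.

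For uniqueness the two approaches are genuinely different. The paper pushes any solution $\mu$ forward to a measure $\nu$ on the torus via the Cayley map~(\ref{f2_47}), observes that the trigonometric moments of $\nu$ are the prescribed numbers $u_{0,k,-k;0,l,-l}$, and then appeals to Weierstrass approximation and determinacy of the moment problem on a rectangle to conclude that $\nu$, hence $\mu$, is unique. Your argument instead identifies $(H,A,B)$ with the multiplication model in $L^2_\nu$ and reads off $\nu$ from the joint spectral measure. Both are valid; the paper's is more elementary and self-contained, while yours stays within the operator-theoretic framework already set up and avoids the change of variables.

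One caution on your density step: the parenthetical claim that the functions $x_1^m(x_1+i)^k(x_1-i)^l x_2^n(x_2+i)^r(x_2-i)^t$ extend continuously to the one-point compactification of $\mathbb{R}^2$ is false as written (take $m=1$, all other indices zero; or take $k=-1$, all others zero, which is bounded but has no limit at infinity along $x_2\to\infty$). What does work is to restrict to the bounded unimodular functions $\bigl(\frac{x_1-i}{x_1+i}\bigr)^k\bigl(\frac{x_2-i}{x_2+i}\bigr)^l$, which under the Cayley map become the characters $e^{ik\varphi}e^{il\psi}$ on $\mathbb{T}^2$; Stone--Weierstrass there gives density in $C(\mathbb{T}^2)$ and hence, pulling back, density of your span in $L^2_\nu$. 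Alternatively, density can be bypassed entirely: since $H_\nu$ is invariant under the Cayley transforms $(x_j-i)(x_j+i)^{-1}$ and their adjoints, it is invariant under their spectral projections, which are exactly the multiplication projections $\chi_\Delta$; this already forces $(E'(\Delta)1,1)_{H_\nu}=\nu(\Delta)$ without knowing $H_\nu=L^2_\nu$.
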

\begin{proof}
The sufficiency of conditions~(\ref{f2_2}) and~(\ref{f2_8})-(\ref{f2_11}) for the existence of
a solution  of the moment problem~(\ref{f2_1}) was shown before the statement of the Theorem.
The necessity of condition~(\ref{f2_2}) was proved, as well. Let us check that
conditions~(\ref{f2_8})-(\ref{f2_11}) are necessary for the solvability of the moment problem~(\ref{f2_1}).

\noindent
Let $\mu$ be a solution of the moment problem~(\ref{f2_1}).
Consider the space $L^2_\mu$ and the following subsets in $L^2_\mu$:
\begin{equation}
\label{f2_35}
L_\mu = \mathop{\rm Lin}\nolimits \{ x_1^m (x_1+i)^k (x_1-i)^l x_2^n (x_2+i)^r
(x_2-i)^t \}_{(m,k,l;n,r,t)\in \Omega},\quad H_\mu = \overline{L_\mu}.
\end{equation}
We denote
\begin{equation}
\label{f2_36}
y_{m,k,l;n,r,t} :=  x_1^m (x_1+i)^k (x_1-i)^l x_2^n (x_2+i)^r (x_2-i)^t,\qquad (m,k,l;n,r,t)\in \Omega.
\end{equation}
Notice that
\begin{equation}
\label{f2_37}
(y_{m,k,l;n,r,t},y_{m',k',l';n',r',t'})_{L^2_\mu} = u_{m+m',k+l',l+k'; n+n',r+t',t+r'},
\end{equation}
for all $(m,k,l;n,r,t),(m',k',l';n',r',t')\in \Omega$.
Consider the operators of multiplication by the independent variable in $L^2_\mu$:
\begin{equation}
\label{f2_38}
A_{\mu} f(x_1,x_2) = x_1 f(x_1,x_2),\ B_{\mu} f(x_1,x_2) = x_2 f(x_1,x_2),\quad f\in L^2_\mu.
\end{equation}
Notice that
\begin{equation}
\label{f2_39}
(A_{\mu} + iE_{L^2_\mu}) y_{m,k,l;n,r,t} = y_{m,k+1,l;n,r,t},
\end{equation}
\begin{equation}
\label{f2_40}
(A_{\mu} - iE_{L^2_\mu}) y_{m,k,l;n,r,t} = y_{m,k,l+1;n,r,t},
\end{equation}
\begin{equation}
\label{f2_41}
(B_{\mu} + iE_{L^2_\mu}) y_{m,k,l;n,r,t} = y_{m,k,l;n,r+1,t},
\end{equation}
\begin{equation}
\label{f2_42}
(B_{\mu} - iE_{L^2_\mu}) y_{m,k,l;n,r,t} = y_{m,k,l;n,r,t+1},
\end{equation}
for all $(m,k,l;n,r,t),(m',k',l';n',r',t')\in \Omega$.

\noindent
Since conditions~(\ref{f2_2}) are satisfied, by Theorem~\ref{t2_1} there exist a Hilbert space $H$ and
a sequence of elements $\{ x_{m,k,l;n,r,t} \}_{(m,k,l;n,r,t)\in \Omega}$, in $H$, such that
relation~(\ref{f2_3}) holds. Repeating arguments after
the Proof of Theorem~\ref{t2_1} we construct operators $A_0$ and $B_0$ in $H$.
Consider the  following operator:
\begin{equation}
\label{f2_43}
W_0 \sum_{(m,k,l;n,r,t)\in \Omega} \alpha_{m,k,l;n,r,t} y_{m,k,l;n,r,t} =
\sum_{(m,k,l;n,r,t)\in \Omega} \alpha_{m,k,l;n,r,t} x_{m,k,l;n,r,t},
\end{equation}
where all but finite number of complex coefficients $\alpha_{m,k,l;n,r,t}$ are zeros.
Let us check that this operator is defined correctly.
In fact, suppose that
\begin{equation}
\label{f2_44}
x = \sum_{(m,k,l;n,r,t)\in \Omega} \alpha_{m,k,l;n,r,t} y_{m,k,l;n,r,t} =
\sum_{(m',k',l';n',r',t')\in \Omega} \beta_{m',k',l';n',r',t'} y_{m',k',l';n',r',t'},
\end{equation}
where $\beta_{m',k',l';n',r',t'}\in \mathbb{C}$.
We may write
$$ 0 = \left\| \sum_{(m,k,l;n,r,t)\in \Omega} (\alpha_{m,k,l;n,r,t} - \beta_{m,k,l;n,r,t}) y_{m,k,l;n,r,t}
\right\|_{L^2_\mu}^2 $$
$$ = \sum_{(m,k,l;n,r,t),(m',k',l';n',r',t')\in \Omega } (\alpha_{m,k,l;n,r,t} - \beta_{m,k,l;n,r,t})
 $$
$$ * \overline{ (\alpha_{m',k',l';n',r',t'} - \beta_{m',k',l';n',r',t'}) } (
y_{m,k,l;n,r,t}, y_{m',k',l';n',r',t'}
)_{L^2_\mu} $$
$$ = \sum_{(m,k,l;n,r,t),(m',k',l';n',r',t')\in \Omega } (\alpha_{m,k,l;n,r,t} - \beta_{m,k,l;n,r,t})
 $$
$$ * \overline{ (\alpha_{m',k',l';n',r',t'} - \beta_{m',k',l';n',r',t'}) } (
x_{m,k,l;n,r,t}, x_{m',k',l';n',r',t'}
)_H $$
$$ = \left\| \sum_{(m,k,l;n,r,t)\in \Omega} (\alpha_{m,k,l;n,r,t} - \beta_{m,k,l;n,r,t}) x_{m,k,l;n,r,t}
\right\|_H. $$
Thus, the operator $W_0$ is defined correctly.
If $\widetilde x\in H$ and
$$ \widetilde x = \sum_{(m,k,l;n,r,t)\in \Omega} \gamma_{m,k,l;n,r,t} y_{m,k,l;n,r,t}, $$
where $\gamma_{m,k,l;n,r,t}\in \mathbb{C}$,
then
$$ (W_0 x, W_0 \widetilde x)_H =
\sum_{(m,k,l;n,r,t),(m',k',l';n',r',t')\in \Omega } \alpha_{m,k,l;n,r,t}
\overline{ \gamma_{m',k',l';n',r',t'} } $$
$$ * ( x_{m,k,l;n,r,t}, x_{m',k',l';n',r',t'} )_H $$
$$ = \sum_{(m,k,l;n,r,t),(m',k',l';n',r',t')\in \Omega } \alpha_{m,k,l;n,r,t}
\overline{ \gamma_{m',k',l';n',r',t'} }
( y_{m,k,l;n,r,t}, y_{m',k',l';n',r',t'} )_H $$
$$ = (x,\widetilde x)_{L^2_\mu}. $$
By continuity we extend $W_0$ to a unitary operator $W$ which maps $H_\mu$ onto $H$.
Observe that
\begin{equation}
\label{f2_45}
W^{-1} A_0 W y_{m,k,l;n,r,t} = y_{m+1,k,l;n,r,t} = A_\mu y_{m,k,l;n,r,t},
\end{equation}
\begin{equation}
\label{f2_46}
W^{-1} B_0 W y_{m,k,l;n,r,t} = y_{m,k,l;n+1,r,t} = B_\mu y_{m,k,l;n,r,t},
\end{equation}
for all $(m,k,l;n,r,t)\in \Omega$.
By using the last relations in relations~(\ref{f2_39})-(\ref{f2_42}) we obtain
relations (\ref{f2_20})-(\ref{f2_23}). The latter relations are equivalent to
conditions~(\ref{f2_8})-(\ref{f2_11}).

Let us check that the solution of the moment problem is unique.
Consider the following transformation
$$ T:\ (x_1,x_2)\in \mathbb{R}^2 \mapsto (\varphi,\psi)\in [0,2\pi)\times[0,2\pi), $$
\begin{equation}
\label{f2_47}
e^{i\varphi} = \frac{x_1+i}{x_1-i},\quad e^{i\psi} = \frac{x_2+i}{x_2-i};
\end{equation}
and set
\begin{equation}
\label{f2_48}
\nu (\Delta) = \mu( T^{-1} (\Delta) ),\quad \Delta\in \mathfrak{B}([0,2\pi)\times[0,2\pi)).
\end{equation}
Since $T$ is a bijective continuous transformation, then $\nu$ is a non-negative Borel measure
on $[0,2\pi)\times[0,2\pi)$. Moreover, we have
\begin{equation}
\label{f2_49}
u_{0,k,-k;0,l,-l} = \int_{\mathbb{R}^2} \left( \frac{x_1+i}{x_1-i} \right)^k
\left( \frac{x_2+i}{x_2-i} \right)^l d\mu =
\int_{[0,2\pi)\times[0,2\pi)} e^{ik\varphi} e^{il\psi} d\nu,\quad
\end{equation}
for all $k,l\in \mathbb{Z}$.
Let $\widetilde\mu$ be another solution of the moment problem~(\ref{f2_1}) and
$\widetilde\nu$ be defined by
\begin{equation}
\label{f2_50}
\widetilde\nu (\Delta) = \widetilde\mu( T^{-1} (\Delta) ),\quad \Delta\in \mathfrak{B}([0,2\pi)\times[0,2\pi)).
\end{equation}
By relation~(\ref{f2_49}) we obtain that
\begin{equation}
\label{f2_51}
\int_{[0,2\pi)\times[0,2\pi)} e^{ik\varphi} e^{il\psi} d\nu =
\int_{[0,2\pi)\times[0,2\pi)} e^{ik\varphi} e^{il\psi} d\widetilde\nu,\quad k,l\in \mathbb{Z}.
\end{equation}
By the Weierstrass theorem we can approximate $\varphi^m$ and $\psi^n$, for some
fixed $m,n\in \mathbb{Z}_+$,
by trigonometric polynomials $P_k(\varphi)$ and $R_k(\psi)$, respectively:
\begin{equation}
\label{f2_51_1}
\max_{ \varphi\in [0,2\pi) } | \varphi^m - P_k(\varphi) |\leq \frac{1}{k},\quad
\max_{ \psi\in [0,2\pi) } | \psi^m - R_k(\psi) |\leq \frac{1}{k},\quad k\in \mathbb{N}.
\end{equation}
Then
$$ \left|
\int_{[0,2\pi)\times[0,2\pi)} \varphi^m \psi^n d\nu -
\int_{[0,2\pi)\times[0,2\pi)} P_k(\varphi) R_k(\psi) d\nu
\right| $$
$$
=
\left|
\int_{[0,2\pi)\times[0,2\pi)} (\varphi^m -P_k(\varphi))\psi^n d\nu +
\int_{[0,2\pi)\times[0,2\pi)} P_k(\varphi) (\psi^n - R_k(\psi)) d\nu
\right| $$
$$ \leq
\max_{ \psi\in [0,2\pi) } | \psi^m | \frac{1}{k} \nu([0,2\pi)) +
\max_{ \varphi\in [0,2\pi) } | P_k(\varphi) | \frac{1}{k} \nu([0,2\pi)) $$
$$ \leq
\max_{ \psi\in [0,2\pi) } | \psi^m | \frac{1}{k} \nu([0,2\pi)) +
\left( \frac{1}{k} + \max_{ \varphi\in [0,2\pi) } | \varphi^m | \right)
\frac{1}{k} \nu([0,2\pi))
\rightarrow 0, $$
as $k\to\infty$.
In the same manner we get
$$ \left|
\int_{[0,2\pi)\times[0,2\pi)} \varphi^m \psi^n d\widetilde\nu -
\int_{[0,2\pi)\times[0,2\pi)} P_k(\varphi) R_k(\psi) d\widetilde\nu
\right| \rightarrow 0, $$
as $k\to\infty$.
Hence, we  conclude that
\begin{equation}
\label{f2_52}
\int_{[0,2\pi)\times[0,2\pi)} \varphi^m \psi^n d\nu =
\int_{[0,2\pi)\times[0,2\pi)} \varphi^m \psi^n d\widetilde\nu,\quad m,n\in \mathbb{Z}_+.
\end{equation}
Since the two-dimensional moment problem on a rectangular has a unique solution, we get
$\nu = \widetilde\nu$ and $\mu = \widetilde\mu$.
\end{proof}

\section{An algorithm towards solving the two-dimensional moment problem.}
As a first application of our results on the extended two-dimensional moment problem we get the
following theorem.
\begin{theorem}
\label{t3_1}
Let the two-dimensional moment problem~(\ref{f1_1}) be given. The moment problem has a solution
if and only if there exists a sequence of complex numbers
$\{ u_{m,k,l;n,r,t} \}_{(m,k,l;n,r,t)\in\Omega}$, which
satisfies conditions~(\ref{f2_2}),~(\ref{f2_8})-(\ref{f2_11}) and
\begin{equation}
\label{f3_1}
u_{m,0,0;n,0,0} = s_{m,n},\qquad m,n\in \mathbb{Z}_+.
\end{equation}
\end{theorem}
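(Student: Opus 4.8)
The plan is to deduce the statement from Theorem~\ref{t2_2} by passing back and forth between the moment problem~(\ref{f1_1}) and the extended moment problem~(\ref{f2_1}). The only analytic point requiring attention is that a representing measure for~(\ref{f1_1}) automatically integrates every function $x_1^m(x_1+i)^k(x_1-i)^l x_2^n(x_2+i)^r(x_2-i)^t$ occurring in~(\ref{f2_1}): since $|(x_1\pm i)^j| = (1+x_1^2)^{j/2}$ for all $j\in\mathbb{Z}$, the factors carrying negative exponents $k,l,r,t$ are bounded on $\mathbb{R}^2$, and what remains is dominated by a polynomial.

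For the necessity, assume $\mu$ solves~(\ref{f1_1}). First I would record that all mixed absolute moments of $\mu$ are finite: by the Cauchy--Schwarz inequality $\int_{\mathbb{R}^2}|x_1|^a|x_2|^b\,d\mu\le s_{2a,0}^{1/2}s_{0,2b}^{1/2}<\infty$ for $a,b\in\mathbb{Z}_+$. Since for $(m,k,l;n,r,t)\in\Omega$ one has $\bigl|x_1^m(x_1+i)^k(x_1-i)^l x_2^n(x_2+i)^r(x_2-i)^t\bigr| = |x_1|^m(1+x_1^2)^{(k+l)/2}|x_2|^n(1+x_2^2)^{(r+t)/2}$, which is majorized by a finite sum of monomials $|x_1|^a|x_2|^b$, the quantity
\[
u_{m,k,l;n,r,t} := \int_{\mathbb{R}^2} x_1^m(x_1+i)^k(x_1-i)^l x_2^n(x_2+i)^r(x_2-i)^t\,d\mu
\]
is a well-defined complex number for each $(m,k,l;n,r,t)\in\Omega$. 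Then condition~(\ref{f2_2}) holds by exactly the computation carried out just before Theorem~\ref{t2_1} (it is the inequality $\int_{\mathbb{R}^2}|P|^2\,d\mu\ge 0$), and conditions~(\ref{f2_8})--(\ref{f2_11}) hold because, under the integral sign, they are the pointwise identities $x_1 g\pm i g=(x_1\pm i)g$ and $x_2 g\pm i g=(x_2\pm i)g$ applied to $g=x_1^{m+a}(x_1+i)^{k+c}(x_1-i)^{l+b}x_2^{n+d}(x_2+i)^{r+f}(x_2-i)^{t+e}$. Finally, putting $k=l=r=t=0$ gives $u_{m,0,0;n,0,0}=\int_{\mathbb{R}^2}x_1^m x_2^n\,d\mu=s_{m,n}$, which is~(\ref{f3_1}).

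For the sufficiency, let a sequence $\{u_{m,k,l;n,r,t}\}_{(m,k,l;n,r,t)\in\Omega}$ satisfying~(\ref{f2_2}),~(\ref{f2_8})--(\ref{f2_11}) and~(\ref{f3_1}) be given. By Theorem~\ref{t2_2} the extended two-dimensional moment problem~(\ref{f2_1}) with this data is (uniquely) solvable; let $\mu$ be its solution, a non-negative Borel measure on $\mathbb{R}^2$. Taking $k=l=r=t=0$ in~(\ref{f2_1}) and invoking~(\ref{f3_1}) gives $\int_{\mathbb{R}^2}x_1^m x_2^n\,d\mu=u_{m,0,0;n,0,0}=s_{m,n}$ for all $m,n\in\mathbb{Z}_+$, so $\mu$ solves~(\ref{f1_1}). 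The only step that is not pure bookkeeping is the well-definedness of the extended moments in the necessity direction, and I do not expect genuine difficulty there: it rests on the two elementary observations that a representing measure has all polynomial moments finite and that $(x_1\pm i)^{-1}$, $(x_2\pm i)^{-1}$ are bounded on $\mathbb{R}$.
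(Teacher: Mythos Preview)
Your proof is correct and follows exactly the route the paper intends: the paper states Theorem~\ref{t3_1} as ``a first application'' of Theorem~\ref{t2_2} and declares the proof obvious, and your argument is precisely the reduction to Theorem~\ref{t2_2} in both directions. The only detail you supply beyond what the paper spells out is the finiteness of the extended moments $u_{m,k,l;n,r,t}$ for a representing measure $\mu$ of~(\ref{f1_1}), and your estimate via $(1+x_1^2)^{(k+l)/2}\le (1+|x_1|)^{\max(k+l,0)}$ and Cauchy--Schwarz is sound.
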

The proof is obvious and left to the reader.

Let the two-dimensional moment problem~(\ref{f1_1}) be given.
As it is well known (and can be checked in the same manner as for the relation~(\ref{f2_2}))
the necessary condition for its solvability is the following:
\begin{equation}
\label{f3_2}
\sum_{ m,n,m',n' \in \mathbb{Z}_+ } \alpha_{m,n}
\overline{ \alpha_{m',n'} } s_{m+m',n+n'} \geq 0,
\end{equation}
for arbitrary complex coefficients $\alpha_{m,n}$, where
all but finite number of $\alpha_{m,n}$ are zeros.

\noindent
We assume that the condition~(\ref{f3_2}) holds.
Repeating arguments of the proof of Theorem~\ref{t2_1} we can state that there exist
a Hilbert space $\mathcal{H}_0$ and a sequence $\{ h_{m,n} \}_{m,n\in \mathbb{Z}_+}$ such that
\begin{equation}
\label{f3_3}
(h_{m,n},h_{m',n'})_{\mathcal{H}_0} = s_{m+m',n+n'},\quad m,n,m',n'\in \mathbb{Z}_+.
\end{equation}
Consider the following Hilbert space:
\begin{equation}
\label{f3_4}
\mathcal{H} = \mathcal{H}_0 \oplus \left(
\bigoplus_{j=1}^\infty \mathcal{H}_j
\right),
\end{equation}
where $\mathcal{H}_j$ are arbitrary one-dimensional Hilbert spaces, $j\in \mathbb{N}$.
We shall call it {\bf the model space for the two-dimensional moment problem}.

\noindent
Introduce an arbitrary indexation in the set $\Omega'$ by the unique positive integer index $j$:
\begin{equation}
\label{f3_5}
j\in \mathbb{N} \mapsto w = w(j)= (m,k,l;n,r,t)(j) \in \Omega'.
\end{equation}

Suppose that the two-dimensional moment problem~(\ref{f1_1}) has a solution $\mu$.
Consider the space $L^2_\mu$ and the following subsets in $L^2_\mu$:
\begin{equation}
\label{f3_6}
L_{\mu,0} = \mathop{\rm Lin}\nolimits \{ x_1^m x_2^n \}_{m,n\in \mathbb{Z}_+},\quad
H_{\mu,0} = \overline{L_{\mu,0}}.
\end{equation}
We denote
\begin{equation}
\label{f3_7}
y_{m,n} :=  x_1^m x_2^n,\qquad m,n\in \mathbb{Z}_+.
\end{equation}
Notice that
\begin{equation}
\label{f3_8}
(y_{m,n},y_{m',n'})_{L^2_\mu} = s_{m+m',n+n'},
\end{equation}
for all $m,n,m',n'\in \mathbb{Z}_+$.
We shall also use the notations from~(\ref{f2_35}),(\ref{f2_36}).

\noindent
Define the following numbers
$$ u_{m,k,l;n,r,t} :=
\int_{\mathbb{R}^2} x_1^m (x_1+i)^k (x_1-i)^l  x_2^n (x_2+i)^r (x_2-i)^t d\mu,\qquad $$
\begin{equation}
\label{f3_9}
(m,k,l;n,r,t)\in \Omega.
\end{equation}
For these numbers conditions~(\ref{f2_2}) hold and repeating arguments after the relation~(\ref{f2_42})
we construct a Hilbert space $H$ and
a sequence of elements $\{ x_{m,k,l;n,r,t} \}_{(m,k,l;n,r,t)\in \Omega}$, in $H$, such that
relation~(\ref{f2_3}) holds.
We introduce the operator $W$ as after~(\ref{f2_43}). The operator $W$ maps $H_\mu$ onto $H$.
Set
\begin{equation}
\label{f3_10}
H_0 = \mathop{\rm span}\nolimits \{ x_{m,0,0;n,0,0} \}_{m,n\in \mathbb{Z}_+} \subseteq H.
\end{equation}
Let us construct a sequence of Hilbert spaces $H_j$, $j\in \mathbb{N}$, in the following
way.

\noindent
{\bf Step 1.} We set
\begin{equation}
\label{f3_11}
f_1 = x_{ w(1) } - P^H_{H_0} x_{ w(1) },\quad H_1 = \mathop{\rm span}\nolimits \{ f_1 \},
\end{equation}
where $w(\cdot)$ is the indexation in the  set $\Omega'$.

\noindent
{\bf Step $r$, with $r\geq 2$}. We set
\begin{equation}
\label{f3_12}
f_r = x_{ w(r) } - P^H_{ H_0 \oplus ( \oplus_{1\leq t\leq r-1} H_t ) } x_{ w(r) },\quad
H_r = \mathop{\rm span}\nolimits \{ f_r \}.
\end{equation}
Then we get a representation
\begin{equation}
\label{f3_13}
H = H_0 \oplus \left(
\bigoplus_{j=1}^\infty H_j
\right).
\end{equation}
Observe that $H_j$ is either a one-dimensional Hilbert space or $H_j = \{ 0 \}$.
We denote
\begin{equation}
\label{f3_14}
\Lambda_\mu = \{ j\in \mathbb{N}:\ H_j \not= \{ 0 \} \},\qquad \Lambda_\mu' = \mathbb{N}\backslash \Lambda_\mu.
\end{equation}
Then
\begin{equation}
\label{f3_15}
H = H_0 \oplus \left(
\bigoplus_{j\in \Lambda_\mu} H_j
\right).
\end{equation}
We shall construct a unitary operator $U$ which maps $H$ onto the following subspace of
the model space $\mathcal{H}$:
\begin{equation}
\label{f3_16}
\widehat{\mathcal{H}} = \mathcal{H}_0 \oplus \left(
\bigoplus_{j\in \Lambda_\mu} \mathcal{H}_j
\right) \subset \mathcal{H}.
\end{equation}
Choose an arbitrary element
\begin{equation}
\label{f3_17}
x = \sum_{m,n\in \mathbb{Z}_+} \alpha_{m,n} x_{m,0,0;n,0,0} + \sum_{j\in\Lambda_\mu} \beta_j \frac{f_j}
{ \| f_j \|_H },
\end{equation}
with $\alpha_{m,n},\beta_j\in \mathbb{C}$.
Set
\begin{equation}
\label{f3_18}
Ux = \sum_{m,n\in \mathbb{Z}_+} \alpha_{m,n} h_{m,n} + \sum_{j\in\Lambda_\mu} \beta_j e_j,
\end{equation}
where $e_j\in \mathcal{H}_j$, $\| e_j \|_{\mathcal{H}} = 1$, are chosen arbitrarily.

\noindent
Let us check that this definition is correct. Suppose that $x$ has another representation:
\begin{equation}
\label{f3_19}
x = \sum_{m,n\in \mathbb{Z}_+} \widetilde\alpha_{m,n} x_{m,0,0;n,0,0} + \sum_{j\in\Lambda_\mu}
\widetilde\beta_j \frac{f_j}{ \| f_j \|_H },
\end{equation}
with $\widetilde\alpha_{m,n},\widetilde\beta_j\in \mathbb{C}$.
By orthogonality we have $\beta_j = \widetilde\beta_j$, $j\in \mathbb{N}$.
Then
$$ 0 = \left\| \sum_{m,n\in \mathbb{Z}_+} (\alpha_{m,n} - \widetilde\alpha_{m,n}) x_{m,0,0;n,0,0}
\right\|_{H}^2 $$
$$ = \sum_{m,n,m',n'\in \mathbb{Z}_+} (\alpha_{m,n} - \widetilde\alpha_{m,n})
 \overline{ (\alpha_{m',n'} - \widetilde\alpha_{m',n'}) } (
x_{m,0,0;n,0,0}, x_{m',0,0;n',0,0}
)_{H} $$
$$ = \sum_{m,n,m',n'\in \mathbb{Z}_+} (\alpha_{m,n} - \widetilde\alpha_{m,n})
 \overline{ (\alpha_{m',n'} - \widetilde\alpha_{m',n'}) } (
h_{m,n}, h_{m',n'}
)_{\mathcal{H}} $$
$$ = \left\| \sum_{m,n\in \mathbb{Z}_+} (\alpha_{m,n} - \widetilde\alpha_{m,n}) h_{m,n}
\right\|_\mathcal{H}. $$
Thus, the operator $U$ is defined correctly.
If $\widehat x\in H$ and
$$ \widehat x =
\sum_{m,n\in \mathbb{Z}_+} \widehat\alpha_{m,n} x_{m,0,0;n,0,0} + \sum_{j\in\Lambda_\mu}
\widehat\beta_j \frac{f_j}{ \| f_j \|_H },
$$
where $\widehat\alpha_{m,n}, \widehat\beta_j\in \mathbb{C}$,
then
$$ (x, \widehat x)_H =
\sum_{m,n,m',n'\in \mathbb{Z}_+} \alpha_{m,n}\overline{ \widehat\alpha_{m',n'} }
( x_{m,0,0;n,0,0}, x_{m',0,0;n',0,0} )_H + \sum_{j\in\Lambda_\mu} \beta_j \overline{ \widehat\beta_j }$$
$$ = \sum_{m,n,m',n'\in \mathbb{Z}_+} \alpha_{m,n}\overline{ \widehat\alpha_{m',n'} }
( h_{m,n}, h_{m',n'} )_\mathcal{H} + \sum_{j\in\Lambda_\mu} \beta_j \overline{ \widehat\beta_j } $$
$$ = (Ux, U\widehat x)_{\mathcal{H}}. $$
By continuity we extend $U$ to a unitary operator which maps $H$ onto $\widehat{\mathcal{H}}$.
Then the operator $UW$ is a unitary operator which maps $H_\mu$ onto $\widehat{\mathcal{H}}$.
We could define this operator directly, but we prefer to underline an abstract structure of the
corresponding spaces and this maybe explains where the model space comes from.

\noindent
We set
\begin{equation}
\label{f3_20}
h_{m,k,l;n,r,t} := UW y_{m,k,l;n,r,t},\quad
(m,k,l;n,r,t)\in\Omega.
\end{equation}
Observe that
\begin{equation}
\label{f3_21}
h_{m,0,0;n,0,0} = h_{m,n},\quad m,n\in \mathbb{Z}_+;\quad UW H_{\mu,0} = \mathcal{H}_0.
\end{equation}
Since
\begin{equation}
\label{f3_22}
x_{w(r)} \in H_0 \oplus \left(
\bigoplus_{j\in \Lambda_\mu:\ j\leq r} H_j
\right),
\end{equation}
then
\begin{equation}
\label{f3_23}
h_{w(r)} \in \mathcal{H}_0 \oplus \left(
\bigoplus_{j\in \Lambda_\mu:\ j\leq r} \mathcal{H}_j
\right),\qquad r\in \mathbb{N}.
\end{equation}
Observe that $\{ y_{m,k,l;n,r,t} \}_{ (m,k,l;n,r,t)\in\Omega }$ satisfy
relations~(\ref{f2_16})-(\ref{f2_19}) (with $y$ instead of $x$).
Therefore $\{ h_{m,k,l;n,r,t} \}_{ (m,k,l;n,r,t)\in\Omega }$ satisfy
relations, as well.
Notice that
$$ (h_{m,k,l;n,r,t},h_{m',k',l';n',r',t'})_{\mathcal{H}} =
(y_{m,k,l;n,r,t},y_{m',k',l';n',r',t'})_{L^2_\mu} $$
\begin{equation}
\label{f3_24}
 = u_{m+m',k+l',l+k';n+n',r+t',t+r'},
\end{equation}
for all $(m,k,l;n,r,t),(m',k',l';n',r',t')\in \Omega$.

\noindent
Choose an arbitrary $r\in \mathbb{N}$.
By~(\ref{f3_12}) we may write
$$ x_{ w(r) } = f_r + P^H_{ H_0 \oplus ( \oplus_{1\leq t\leq r-1} H_t ) } x_{ w(r) } $$
\begin{equation}
\label{f3_24_1}
= \| f_r \| \frac{f_r}{ \| f_r \| } + \sum_{t\in\Lambda_\mu:\ 1\leq t\leq r-1} \beta_t
\frac{f_t}{ \| f_t \| } + u_0,
\end{equation}
where $\beta_t\in \mathbb{C}$, $u_0\in H_0$.
By~(\ref{f3_20}) and~(\ref{f3_18}) we get
\begin{equation}
\label{f3_24_2}
h_{ w(r) } = Ux_{ w(r) } = \| f_r \| e_r + \sum_{t\in\Lambda_\mu:\ 1\leq t\leq r-1} \beta_t e_t
+ w_0,
\end{equation}
where $w_0 = U u_0 \in \mathcal{H}_0$. Therefore
\begin{equation}
\label{f3_24_3}
(h_{ w(r) }, e_r) \geq 0;
\end{equation}
and
\begin{equation}
\label{f3_24_4}
h_{w(r)} \in \mathcal{H}_0 \oplus \left(
\bigoplus_{t\in \Lambda_\mu:\ 1\leq t\leq r-1} \mathcal{H}_t
\right) \Leftrightarrow
(h_{ w(r) }, e_r) = 0 \Leftrightarrow f_r = 0 \Leftrightarrow r\in\Lambda_\mu'.
\end{equation}
In particular, we may write
\begin{equation}
\label{f3_24_5}
r\in\Lambda_\mu \Leftrightarrow
(h_{ w(r) }, e_r) > 0,\qquad r\in \mathbb{N}.
\end{equation}
\begin{theorem}
\label{t3_2}
Let the two-dimensional moment problem~(\ref{f1_1}) be given and condition~(\ref{f3_2}) holds.
Choose an arbitrary model
space $\mathcal{H}$ with a sequence $\{ h_{m,n} \}_{m,n\in \mathbb{Z}_+}$,
satisfying~(\ref{f3_3}) and fix it. The moment problem has a solution
if and only if there exists a sequence
$\{ h_{m,k,l;n,r,t} \}_{(m,k,l;n,r,t)\in\Omega}$, in $\mathcal{H}$ such that
the following conditions hold:

\begin{itemize}
\item[1)] $h_{m,0,0;n,0,0} = h_{m,n}$, $m,n\in \mathbb{Z}_+$;

\item[2)] 
$h_{w(r)} \in \mathcal{H}_0 \oplus \left(
\bigoplus_{j\in \Lambda:\ 0\leq j\leq r} \mathcal{H}_j
\right)$, and $(h_{w(r)},e_r) \geq 0$, $r\in \mathbb{N}$, for some subset $\Lambda\subset \mathbb{N}$.

\item[3)] The sequence $\{ h_{m,k,l;n,r,t} \}_{(m,k,l;n,r,t)\in\Omega}$
satisfies conditions~(\ref{f2_16})-(\ref{f2_19}) (with $h$ instead of $x$).

\item[4)] There exists a complex function $\varphi(m,k,l;n,r,t)$, $(m,k,l;n,r,t)\in\Omega$,
such that
$$ (h_{m,k,l;n,r,t},h_{m',k',l';n',r',t'})_{\mathcal{H}} =
\varphi(m+m',k+l',l+k';n+n',r+t',t+r'), $$
for all $(m,k,l;n,r,t),(m',k',l';n',r',t')\in \Omega$.
\end{itemize}
\end{theorem}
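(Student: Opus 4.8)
The plan is to prove the two implications of the equivalence separately, drawing almost entirely on material already developed. For the necessity I would simply point out that the construction carried out between~(\ref{f3_9}) and~(\ref{f3_24_5}), performed under the hypothesis that~(\ref{f1_1}) admits a solution $\mu$, already produces a sequence with the four listed properties: take $h_{m,k,l;n,r,t} := UW y_{m,k,l;n,r,t}$ as in~(\ref{f3_20}), with $W$ the unitary operator introduced after~(\ref{f2_43}) and $U$ the unitary operator onto $\widehat{\mathcal{H}}$ from~(\ref{f3_18}). Then property~1) is~(\ref{f3_21}); property~3) holds because the sequence $\{ y_{m,k,l;n,r,t}\}_{(m,k,l;n,r,t)\in\Omega}$ satisfies~(\ref{f2_16})--(\ref{f2_19}) with $y$ in place of $x$ and $UW$ is linear; property~4) is~(\ref{f3_24}) with $\varphi(m,k,l;n,r,t) := u_{m,k,l;n,r,t}$ from~(\ref{f3_9}); and property~2), with $\Lambda := \Lambda_\mu$, is exactly what~(\ref{f3_22})--(\ref{f3_24_4}) record, since there $h_{w(r)} \in \mathcal{H}_0 \oplus \bigoplus_{j\in\Lambda_\mu,\ j\le r} \mathcal{H}_j$ and $(h_{w(r)},e_r) = \| f_r\|_H \ge 0$.

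For the sufficiency, suppose a sequence $\{ h_{m,k,l;n,r,t}\}_{(m,k,l;n,r,t)\in\Omega}$ in $\mathcal{H}$ with properties~1)--4) is given, and set $u_{m,k,l;n,r,t} := \varphi(m,k,l;n,r,t)$ for $(m,k,l;n,r,t)\in\Omega$. The goal is to check that this sequence satisfies the hypotheses of Theorem~\ref{t3_1}. Condition~(\ref{f2_2}) comes out of property~4): its left-hand side equals $\bigl\| \sum_{(m,k,l;n,r,t)\in\Omega} \alpha_{m,k,l;n,r,t}\, h_{m,k,l;n,r,t}\bigr\|_{\mathcal{H}}^2 \ge 0$. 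Conditions~(\ref{f2_8})--(\ref{f2_11}) follow by taking the $\mathcal{H}$-inner product of each of the vector identities~(\ref{f2_16})--(\ref{f2_19}) (valid for $\{ h_{m,k,l;n,r,t}\}$ by property~3)) with an arbitrary $h_{a,b,c;d,e,f}$ and re-expressing every inner product through property~4). Condition~(\ref{f3_1}) follows by taking $(m,k,l;n,r,t) = (m,0,0;n,0,0)$ and $(m',k',l';n',r',t') = (0,0,0;0,0,0)$ in property~4) and using property~1) together with~(\ref{f3_3}): $u_{m,0,0;n,0,0} = (h_{m,0,0;n,0,0},h_{0,0,0;0,0,0})_{\mathcal{H}} = (h_{m,n},h_{0,0})_{\mathcal{H}} = s_{m,n}$. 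By Theorem~\ref{t3_1} the moment problem~(\ref{f1_1}) is then solvable; concretely, Theorem~\ref{t2_2} gives the unique solution $\mu$ of the extended problem~(\ref{f2_1}) with data $\{ u_{m,k,l;n,r,t}\}$ via~(\ref{f2_34}), and since $\int_{\mathbb{R}^2} x_1^m x_2^n\, d\mu = u_{m,0,0;n,0,0} = s_{m,n}$, this $\mu$ solves~(\ref{f1_1}).

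All the genuine work has been spent earlier --- the determinacy and explicit solution of the extended two-dimensional moment problem (Theorem~\ref{t2_2}) and the reduction in Theorem~\ref{t3_1} --- so I do not anticipate a deep difficulty; the one point that needs care is the index bookkeeping in property~4), in particular the transpositions $(k,l)\leftrightarrow(k',l')$ and $(r,t)\leftrightarrow(r',t')$ occurring when $(h_{m,k,l;n,r,t},h_{m',k',l';n',r',t'})_{\mathcal{H}}$ is written as $\varphi(m+m',k+l',l+k';n+n',r+t',t+r')$, since handling this correctly is exactly what makes the passage from~(\ref{f2_16})--(\ref{f2_19}) to~(\ref{f2_8})--(\ref{f2_11}) go through. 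I would also remark that property~2) is not in fact used in the sufficiency argument --- properties~1), 3) and 4) already suffice to invoke Theorem~\ref{t3_1} --- but it belongs in the statement because it is automatically verified by the sequence constructed in the necessity part and because it will be used in the subsequent description of all solutions.
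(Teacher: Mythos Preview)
Your proposal is correct and follows essentially the same route as the paper's own proof: for necessity you point to the construction carried out before the theorem (yielding $h_{m,k,l;n,r,t}=UWy_{m,k,l;n,r,t}$), and for sufficiency you set $u_{m,k,l;n,r,t}:=\varphi(m,k,l;n,r,t)$, verify~(\ref{f2_2}) via the squared norm, derive~(\ref{f2_8})--(\ref{f2_11}) from conditions~3) and~4), and recover~(\ref{f3_1}) from conditions~1) and~4), then invoke Theorem~\ref{t2_2} (equivalently Theorem~\ref{t3_1}). Your closing remark that condition~2) is not used in the sufficiency argument but is recorded for later use also matches the paper's own observation made immediately after the proof.
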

\begin{proof}
 The necessity of conditions~1)-4) for the solvability of the two-dimensional moment
problem was established before the statement of the Theorem.

\noindent
Let conditions~1),3),4) be satisfied. Consider the extended two-dimensional moment problem~(\ref{f2_1})
with
\begin{equation}
\label{f3_25}
u_{m,k,l;n,r,t} := \varphi(m,k,l;n,r,t),\qquad (m,k,l;n,r,t)\in \Omega,
\end{equation}
where $\varphi$ is from the condition~4).
Then
$$ \sum_{(m,k,l;n,r,t),(m',k',l';n',r',t')\in \Omega} \alpha_{m,k,l;n,r,t}
\overline{ \alpha_{m',k',l';n',r',t'} } u_{m+m',k+l',l+k';n+n',r+t',t+r'} $$
$$ = \sum_{(m,k,l;n,r,t),(m',k',l';n',r',t')\in \Omega} \alpha_{m,k,l;n,r,t}
\overline{ \alpha_{m',k',l';n',r',t'} } (h_{m,k,l;n,r,t}, h_{m',k',l';n',r',t'})_{\mathcal{H}} $$
$$ = \left\| \sum_{(m,k,l;n,r,t)\in \Omega} \alpha_{m,k,l;n,r,t} h_{m,k,l;n,r,t} \right\|_{\mathcal{H}}^2
\geq 0, $$
for arbitrary complex coefficients $\alpha_{m,k,l;n,r,t}$, where
all but finite number of $\alpha_{m,k,l;n,r,t}$ are zeros.

\noindent
By conditions~3) and~4) we conclude that conditions~(\ref{f2_8})-(\ref{f2_11}) hold.
By Theorem~\ref{t2_2} we obtain that there exists a non-negative Borel measure $\mu$ in
$\mathbb{R}^2$ such that~(\ref{f2_1}) holds. In particular, using conditions~4),1) we get
$$\int_{\mathbb{R}^2} x_1^m x_2^n d\mu = u_{m,0,0;n,0,0} = \varphi(m,0,0;n,0,0) $$
$$ = (h_{m,0,0;n,0,0},h_{0,0,0;0,0,0})_{\mathcal{H}} = (h_{m,n},h_{0,0})_{\mathcal{H}}
= s_{m,n},\quad m,n\in \mathbb{Z}_+. $$
\end{proof}

Observe that condition~2) can be removed from the statement of Theorem~\ref{t3_2}.
However, it will be used later.

Denote a set of sequences $\{ h_{m,k,l;n,r,t} \}_{(m,k,l;n,r,t)\in\Omega}$, in $\mathcal{H}$ satisfying
conditions 1)-4) by $X=X(\mathcal{H})$.
As we have seen in the proof of Theorem~\ref{t3_2},
for an arbitrary $\{ h_{m,k,l;n,r,t} \}_{(m,k,l;n,r,t)\in\Omega} \in X(\mathcal{H})$,
the unique solution of the extended two-dimensional moment problem with moments~(\ref{f3_25})
gives a solution of the two-dimensional moment problem.
Observe that {\it all solutions of the two-dimensional moment problem can be constructed in
this manner}.
Indeed, let $\mu$ be an arbitrary solution of the two-dimensional moment problem.
Repeating arguments from relation~(\ref{f3_6}) till the statement of Theorem~\ref{t3_2}
we may write
$$ \int_{\mathbb{R}_2} x_1^m (x_1+i)^k (x_1-i)^l  x_2^n (x_2+i)^r (x_2-i)^t d\mu =
(y_{m,k,l;n,r,t},y_{0,0,0;0,0,0})_{L^2_\mu} $$
$$ = (UWy_{m,k,l;n,r,t},UWy_{0,0,0;0,0,0})_{\mathcal{H}} =
(h_{m,k,l;n,r,t},h_{0,0,0;0,0,0})_{\mathcal{H}} $$
$$ = \varphi(m,k,l;n,r,t) = u_{m,k,l;n,r,t}, $$
for all $(m,k,l;n,r,t)\in\Omega$. Here the operators $U$,$W$,
the sequence $\{ h_{m,k,l;n,r,t} \}_{(m,k,l;n,r,t)\in\Omega}$, the function $\varphi(m,k,l;n,r,t)$
and the moments $u_{m,k,l;n,r,t}$, of course, depend on the choice of $\mu$.
Notice that $\{ h_{m,k,l;n,r,t} \}_{(m,k,l;n,r,t)\in\Omega} \in X(\mathcal{H})$.

\noindent
For such constructed parameters, the measure $\mu$ is a solution of the extended two-dimensional moment problem
considered in the proof of Theorem~\ref{t3_2}. Since the solution  of this moment problem is unique,
$\mu$ will be reconstructed in the above described manner.

Notice that condition~4) of Theorem~\ref{t3_2} is equivalent to the following conditions:
\begin{equation}
\label{f3_26}
(h_{m,k,l;n,r,t}, h_{m',k',l';n',r',t'})_{\mathcal{H}} =
(h_{\widetilde m,k,l;n,r,t}, h_{\widetilde m',k',l';n',r',t'})_{\mathcal{H}},
\end{equation}
if $m,m',\widetilde m,\widetilde m',n,n'\in \mathbb{Z}_+$,
$k,l,r,t,k',l',r',t'\in \mathbb{Z}$: $m+m' = \widetilde m + \widetilde m'$;
\begin{equation}
\label{f3_27}
(h_{m,k,l;n,r,t}, h_{m',k',l';n',r',t'})_{\mathcal{H}} =
(h_{m,\widetilde k,l;n,r,t}, h_{m',k',\widetilde l';n',r',t'})_{\mathcal{H}},
\end{equation}
if $m,m',n,n'\in \mathbb{Z}_+$,
$k,\widetilde k, l,r,t,k',l',\widetilde l', r',t'\in \mathbb{Z}$: $k+l' = \widetilde k + \widetilde l'$;
\begin{equation}
\label{f3_28}
(h_{m,k,l;n,r,t}, h_{m',k',l';n',r',t'})_{\mathcal{H}} =
(h_{m,k,\widetilde l;n,r,t}, h_{m',\widetilde k',l';n',r',t'})_{\mathcal{H}},
\end{equation}
if $m,m',n,n'\in \mathbb{Z}_+$,
$k,l,\widetilde l, r,t,k',\widetilde k', l',r',t'\in \mathbb{Z}$: $l+k' = \widetilde l + \widetilde k'$;
\begin{equation}
\label{f3_29}
(h_{m,k,l;n,r,t}, h_{m',k',l';n',r',t'})_{\mathcal{H}} =
(h_{m,k,l;\widetilde n,r,t}, h_{m',k',l';\widetilde n',r',t'})_{\mathcal{H}},
\end{equation}
if $m,m',n,n',\widetilde n,\widetilde n'\in \mathbb{Z}_+$,
$k,l,r,t,k',l',r',t'\in \mathbb{Z}$: $n+n' = \widetilde n + \widetilde n'$;
\begin{equation}
\label{f3_30}
(h_{m,k,l;n,r,t}, h_{m',k',l';n',r',t'})_{\mathcal{H}} =
(h_{m,k,l;n,\widetilde r,t}, h_{m',k',l';n',r',\widetilde t'})_{\mathcal{H}},
\end{equation}
if $m,m',n,n'\in \mathbb{Z}_+$,
$k,l,r,\widetilde r, t,k',l',r',t',\widetilde t' \in \mathbb{Z}$: $r+t' = \widetilde r + \widetilde t'$;
\begin{equation}
\label{f3_31}
(h_{m,k,l;n,r,t}, h_{m',k',l';n',r',t'})_{\mathcal{H}} =
(h_{m,k,l;n,r,\widetilde t}, h_{m',k',l';n',\widetilde r',t'})_{\mathcal{H}},
\end{equation}
if $m,m',n,n'\in \mathbb{Z}_+$,
$k,l,r,t,\widetilde t,k', l',r',\widetilde r',t'\in \mathbb{Z}$: $t+r' = \widetilde t + \widetilde r'$.

As we can see, the solving of the two-dimensional moment problem reduces to a construction of
the set $X(\mathcal{H})$. Let us describe an algorithm for a construction of sequences from
$X(\mathcal{H})$.

Let $\{ g_n \}_{n=1}^\infty$ be an arbitrary orthonormal basis in $\mathcal{H}_0$ obtained
by the Gram-Schmidt orthogonalization procedure from the sequence $\{ h_{m,n} \}_{m,n\in \mathbb{Z}_+}$
indexed by a unique index.

Choose an arbitrary $j\in \mathbb{N}$.
Let $ w(j) = (m,k,l;n,r,t)(j)\in\Omega'$. If we had constructed $\{ h_{m,n} \}_{m,n\in \mathbb{Z}_+}\in X(\mathcal{H})$,
then the two-dimensional moment problem has a solution $\mu$ and
$$ d_j := \| h_{w(j)} \|^2_{\mathcal{H}} = \| x_1^m (x_1+i)^k (x_1-i)^l  x_2^n (x_2+i)^r (x_2-i)^t \|^2_{L^2_\mu} $$
$$ = \int_{\mathbb{R}^2} x_1^{2m} (x_1^2+1)^k (x_1^2+1)^l  x_2^{2n} (x_2^2+1)^r (x_2^2+1)^t d\mu. $$
Therefore $d_j$ are bounded by some constants $M_j=M_j(S)$ depending on the prescribed moments
$S:=\{ s_{m,n} \}_{m,n\in \mathbb{Z}_+}$. (Notice that e.g. $(x_1^2+1)^l \leq 1$, for $l<0$, and for
non-negative $m,k,l;n,r,t$ the values of $d_j$ are determined uniquely).

\noindent
{\bf Step 0. } We set
\begin{equation}
\label{f3_32}
h_{m,0,0;n,0,0} = h_{m,n},\qquad m,n\in \mathbb{Z}_+.
\end{equation}
We check that conditions (\ref{f2_12})-(\ref{f2_15}) (with $h$ instead of $x$) and (\ref{f3_26})-(\ref{f3_31})
are satisfied for $h_{m,0,0;n,0,0}$, $m,n\in \mathbb{Z}_0$. If they are not satisfied, the
two-dimensional moment problem has no solution and we stop the algorithm.

\noindent
{\bf Step 1. } We seek for $h_{w(1)}$ in the following form:
\begin{equation}
\label{f3_33}
h_{w(1)} = \sum_{n=1}^\infty \alpha_{1;n} g_n + \beta_{1;1} e_1,
\end{equation}
with some complex coefficients $\alpha_{1;n},\beta_{1;1}$.

\noindent
Conditions (\ref{f2_12})-(\ref{f2_15}) (with $h$ instead of $x$) and (\ref{f3_26})-(\ref{f3_31})
which include $h_{w(1)}$ and the already constructed $h_{m,k,l;n,r,t}$
are equivalent to a set $L_1$ of linear equations with respect to $\alpha_{1;n}$, $n\in N$,  and
$d_1 = \| h_{w(1)} \|^2_{\mathcal{H}}$. Notice that they depend on $\beta_{1;1}$ only
by $d_1$.
Denote the set of solutions of these equations
by
\begin{equation}
\label{f3_34}
S_1 = \left\{ (\alpha_{1;n}, n\in N; d_1):\ \mbox{ equations from $L_1$ are satisfied} \right\}.
\end{equation}
Set
\begin{equation}
\label{f3_35}
\widehat S_1 = \left\{ (\alpha_{1;n}, n\in N; d_1)\in S_1: \sum_{n=1}^\infty |\alpha_{1;n}|^2 \leq d_1,\
d_1\leq M_1 \right\}.
\end{equation}
Finally, we set
\begin{equation}
\label{f3_36}
G_1 = \left\{ \sum_{n=1}^\infty \alpha_{1;n} g_n +
\left( d_1 - \sum_{n=1}^\infty |\alpha_{1;n}|^2 \right)^{\frac{1}{2}}
e_1:\
(\alpha_{1;n}, n\in N; d_1)\in  \widehat S_1 \right\}.
\end{equation}
The case $G_1 = \emptyset$ is not excluded.

\noindent
{\bf Step r, with $r\geq 2$. } We seek for $h_{w(r)}$ in the following form:
\begin{equation}
\label{f3_37}
h_{w(r)} = \sum_{n=1}^\infty \alpha_{r;n} g_n + \sum_{j=1}^{r} \beta_{r;j} e_j,
\end{equation}
with some complex coefficients $\alpha_{r;n},\beta_{r;j}$.

\noindent
Conditions (\ref{f2_12})-(\ref{f2_15}) (with $h$ instead of $x$) and (\ref{f3_26})-(\ref{f3_31})
which include $h_{w(r)}$ and the already constructed $h_{m,k,l;n,r,t}$
are equivalent to a set $L_r$ of linear equations with respect to $\alpha_{r;n}$, $n\in N$,
$\beta_{r;j}$, $1\leq j\leq r-1$,
and
$d_r = \| h_{w(r)} \|^2_{\mathcal{H}}$, and
{\it depending on parameters $( h_{w(1)},h_{w(2)},...,h_{w(r-1)} )\in G_{r-1}$ }.

\noindent
Notice that these linear equations depend on $\beta_{r;j}$ only
by $d_r$.
Denote the set of solutions of these equations
by
$$ S_r = \left\{ ( \alpha_{r;n}, n\in N; \beta_{r;j}, 1\leq j\leq r-1; d_r; h_{w(1)},h_{w(2)},...,h_{w(r-1)}  ): \right. $$
$$ \left. ( h_{w(1)},h_{w(2)},...,h_{w(r-1)} )\in G_{r-1}, \right. $$
\begin{equation}
\label{f3_38}
\left. \mbox{ and equations from $L_r$ with parameters $( h_{w(1)},h_{w(2)},...,h_{w(r-1)} )$, are satisfied} \right\}.
\end{equation}
Set
$$ \widehat S_r = \left\{ (\alpha_{r;n}, n\in N; \beta_{r;j}, 1\leq j\leq r-1; d_r;
h_{w(1)},h_{w(2)},...,h_{w(r-1)} )\in S_r: \right. $$
\begin{equation}
\label{f3_39}
\left. \sum_{n=1}^\infty |\alpha_{r;n}|^2 + \sum_{j=1}^{r-1} |\beta_{r;j}|^2 \leq d_r,\ d_r\leq M_r \right\}.
\end{equation}
Finally, we set
$$ G_r = \left\{ \left(
h_{w(1)},h_{w(2)},...,h_{w(r-1)}, \right. \right. $$
$$ \left. \left.
\sum_{n=1}^\infty \alpha_{r;n} g_n + \sum_{j=1}^{r-1} \beta_{r;j} e_j +
\left( d_r - \sum_{n=1}^\infty |\alpha_{r;n}|^2 - \sum_{j=1}^{r-1} |\beta_{r;j}|^2
\right)^{\frac{1}{2}} e_r
\right): \right. $$
\begin{equation}
\label{f3_40}
\left. (\alpha_{r;n}, n\in N; \beta_{r;j}, 1\leq j\leq r-1; d_r;
h_{w(1)},h_{w(2)},...,h_{w(r-1)} ) \in  \widehat S_r \right\}.
\end{equation}
(The case $G_r = \emptyset$ is not excluded.)

\noindent
{\bf Final step. } Consider a space $\mathbf{H}$ of sequences
\begin{equation}
\label{f3_41}
\mathbf{h} = (h_1,h_2,h_3,...),\qquad h_r\in \mathcal{H},\ r\in \mathbb{N},
\end{equation}
with the norm given by
\begin{equation}
\label{f3_41_1}
\| \mathbf{h} \|_{\mathbf{H}} = \sup_{r\in \mathbb{N}} \frac{1}{ \sqrt{M_r} } \| h_r \|_{\mathcal{H}} <\infty.
\end{equation}
For arbitrary $(h_1,...,h_{r})\in G_r$, we put into correspondence elements $\mathbf{h}\in \mathbf{H}$
of the following form
\begin{equation}
\label{f3_42}
\mathbf{h} = (h_1,...,h_{r},g_{r+1},g_{r+2},...):\ g_j\in \mathcal{H},\ \| g_j \|_{\mathcal{H}} \leq
\sqrt{M_j},\ j>r.
\end{equation}
Thus, the set $G_r$ is mapped onto a set $\mathbf{G}_r\subset \mathbf{H}$.
If $G_r = \emptyset$, we set $\mathbf{G}_r = \emptyset$.
Observe that all elements of $\mathbf{G}_r$ has the norm less or equal to $1$.
Set
\begin{equation}
\label{f3_43}
\mathbf{G} = \bigcap_{r=1}^\infty \mathbf{G}_r.
\end{equation}
If $\mathbf{G}\not= \emptyset$, then
to each $(g_1,g_2,...)\in \mathbf{G}$, we put into correspondence a sequence
$\mathfrak{H} = \{ h_{m,k,l;n,r,t} \}_{(m,k,l;n,r,t)\in\Omega}$ such that~(\ref{f3_32}) holds
and
\begin{equation}
\label{f3_44}
h_{w(r)} := g_r,\qquad r\in \mathbb{N}.
\end{equation}
We state that $\mathfrak{H}\in X(\mathcal{H})$.
In fact, conditions (\ref{f2_12})-(\ref{f2_15}) (with $h$ instead of $x$) and (\ref{f3_26})-(\ref{f3_31})
are satisfied for $h_{m,0,0;n,0,0}$, $m,n\in \mathbb{Z}_+$, by Step~0.
If one of these equations include $h_{w(r)}$ with $r\geq 1$, then we choose the maximal appearing
index $r$. Since $( h_{w(1)},...,h_{w(r)} )\in G_{r}$, then this equation is satisfied.
Condition~2) is satisfied by the construction.

\noindent
Thus, if $\mathbf{G}\not= \emptyset$, then using $\mathfrak{H}$ we can construct a solution
of the two-dimensional moment problem in the described
above manner.

\begin{theorem}
\label{t3_3}
Let the two-dimensional moment problem~(\ref{f1_1}) be given and condition~(\ref{f3_2}) holds.
Choose an arbitrary model
space $\mathcal{H}$ with a sequence $\{ h_{m,n} \}_{m,n\in \mathbb{Z}_+}$,
satisfying~(\ref{f3_3}) and fix it. The moment problem has a solution
if and only if
conditions (\ref{f2_12})-(\ref{f2_15}) (with $h$ instead of $x$) and (\ref{f3_26})-(\ref{f3_31})
are satisfied for $h_{m,0,0;n,0,0}:= h_{m,n}$, $m,n\in \mathbb{Z}_+$, and
\begin{equation}
\label{f3_45}
\mathbf{G} \not= \emptyset,
\end{equation}
where $\mathbf{G}$ is constructed by~(\ref{f3_43}) according to the algorithm.

If the latter conditions are satisfied then
to each $(g_1,g_2,...)\in \mathbf{G}$, we put into correspondence a sequence
$\mathfrak{H} = \{ h_{m,k,l;n,r,t} \}_{(m,k,l;n,r,t)\in\Omega}$ such that~(\ref{f3_32}) holds
and
\begin{equation}
\label{f3_45_1}
h_{w(r)} := g_r,\qquad r\in \mathbb{N}.
\end{equation}
This sequence belongs to $X(\mathcal{H})$ and
the unique solution of the extended two-dimensional moment problem with moments~(\ref{f3_25})
gives a solution $\mu$ of the two-dimensional
moment problem.
Moreover, all solutions of the two-dimensional moment problem can be obtained in this way.
\end{theorem}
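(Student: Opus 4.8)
The plan is to derive the statement by combining Theorem~\ref{t3_2}, Theorem~\ref{t2_2} and the analysis of the algorithm carried out above, organised into three parts: necessity of the stated conditions, sufficiency together with the construction of a solution from each point of $\mathbf{G}$, and the claim that every solution arises in this way. The common bookkeeping device is that each relation among~(\ref{f2_12})--(\ref{f2_15}) (with $h$ instead of $x$) and~(\ref{f3_26})--(\ref{f3_31}) involves only finitely many of the $h_{m,k,l;n,r,t}$, so it is controlled by Step~$0$ if only the $h_{m,0,0;n,0,0}$ occur in it, and otherwise by the step producing $G_r$, where $r$ is the largest index with $w(r)$ appearing in it; the other ingredient is the a priori bound $d_r=\|h_{w(r)}\|_{\mathcal{H}}^2\le M_r=M_r(S)$ noted before Step~$0$.

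For necessity, I would take an arbitrary solution $\mu$ and recall the construction between~(\ref{f3_6}) and the statement of Theorem~\ref{t3_2}: it produces a Hilbert space $H$, a unitary operator $UW\colon H_\mu\to\widehat{\mathcal{H}}$ and the sequence $h_{m,k,l;n,r,t}:=UWy_{m,k,l;n,r,t}$, which by that construction lies in $X(\mathcal{H})$; in particular $h_{m,0,0;n,0,0}=h_{m,n}$ by~(\ref{f3_21}) and the relations~(\ref{f2_12})--(\ref{f2_15}),~(\ref{f3_26})--(\ref{f3_31}) hold for these elements, which is the first asserted condition. It then remains to show $\mathbf{G}\ne\emptyset$; I would do this by proving, by induction on $r$, that $\bigl(h_{w(1)},\dots,h_{w(r)}\bigr)\in G_r$. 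Indeed, by~(\ref{f3_23}) the vector $h_{w(r)}$ has the form~(\ref{f3_37}); since the full sequence satisfies all of~(\ref{f2_12})--(\ref{f2_15}) and~(\ref{f3_26})--(\ref{f3_31}), its coefficients solve the linear system $L_r$; moreover $d_r=\|y_{w(r)}\|_{L^2_\mu}^2\le M_r$, and orthonormality of $\{g_n\}$ together with orthogonality of the $e_j$ gives $\sum_n|\alpha_{r;n}|^2+\sum_{j\le r}|\beta_{r;j}|^2=d_r$, so the defining constraints of $\widehat{S}_r$ hold; finally, by~(\ref{f3_24_3}) the $e_r$-coefficient of $h_{w(r)}$ is non-negative and equals $\bigl(d_r-\sum_n|\alpha_{r;n}|^2-\sum_{j<r}|\beta_{r;j}|^2\bigr)^{1/2}$, matching~(\ref{f3_40}). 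Hence the sequence $(h_{w(1)},h_{w(2)},\dots)$, whose later entries satisfy $\|h_{w(j)}\|_{\mathcal{H}}=\sqrt{d_j}\le\sqrt{M_j}$, lies in every $\mathbf{G}_r$, i.e. in $\mathbf{G}$.

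For sufficiency I would fix $(g_1,g_2,\dots)\in\mathbf{G}$, form $\mathfrak{H}$ by~(\ref{f3_32}) and~(\ref{f3_45_1}), and verify $\mathfrak{H}\in X(\mathcal{H})$ exactly as indicated after~(\ref{f3_44}): an arbitrary relation among~(\ref{f2_12})--(\ref{f2_15}),~(\ref{f3_26})--(\ref{f3_31}) is handled by Step~$0$ or, with $r$ maximal among the occurring indices, follows from $(g_1,\dots,g_r)\in G_r$, which holds because it is the tuple of the first $r$ coordinates of $(g_1,g_2,\dots)\in\mathbf{G}_r$; condition~2) of Theorem~\ref{t3_2} holds by the very construction of the sets $G_r$; and since~(\ref{f2_12})--(\ref{f2_15}) for all index pairs force the vector identities~(\ref{f2_16})--(\ref{f2_19}) (the difference of the two sides lies in the linear span of $\{h_{a,b,c;d,e,f}\}_{(a,b,c;d,e,f)\in\Omega}$ and is orthogonal to every such vector, hence to that closed span, hence zero), all the conditions of Theorem~\ref{t3_2} are met. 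Then, repeating the argument in the proof of Theorem~\ref{t3_2}, the numbers~(\ref{f3_25}) satisfy~(\ref{f2_2}) and~(\ref{f2_8})--(\ref{f2_11}), so by Theorem~\ref{t2_2} the extended moment problem~(\ref{f2_1}) with these moments has a unique solution $\mu$, and conditions~1),4) give $\int_{\mathbb{R}^2}x_1^mx_2^n\,d\mu=s_{m,n}$. Finally, for the last sentence of the theorem, an arbitrary solution $\mu$ yields through the same construction a sequence in $X(\mathcal{H})$ and, by the necessity argument, a corresponding point of $\mathbf{G}$; the associated moments~(\ref{f3_25}) equal $\int_{\mathbb{R}^2}x_1^m(x_1+i)^k(x_1-i)^lx_2^n(x_2+i)^r(x_2-i)^t\,d\mu$, so $\mu$ solves that extended problem, and the uniqueness in Theorem~\ref{t2_2} identifies the measure reconstructed by the algorithm with $\mu$.

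I expect the main obstacle to be the necessity step, precisely the verification that following a genuine solution through the algorithm never breaks down: one must check that the inequalities cutting $\widehat{S}_r$ out of $S_r$ --- both $\sum_n|\alpha_{r;n}|^2+\sum_{j\le r}|\beta_{r;j}|^2\le d_r$ and $d_r\le M_r$ --- are automatically satisfied (the first by the orthonormal expansion, the second by bounding $\int x_1^{2m}(x_1^2+1)^{k+l}x_2^{2n}(x_2^2+1)^{r+t}\,d\mu$ by a constant depending only on $S$), and that the prescribed square-root choice of the $e_r$-component agrees with the one built into~(\ref{f3_40}). None of these computations is deep, but keeping the indexing consistent while passing between the abstract set $X(\mathcal{H})$, the recursively defined sets $G_r,\mathbf{G}_r,\mathbf{G}$, and the measures furnished by Theorem~\ref{t2_2} is the delicate part of the write-up.
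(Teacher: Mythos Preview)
Your proposal is correct and follows essentially the same route as the paper: necessity via Theorem~\ref{t3_2} to land in $X(\mathcal{H})$ and then tracing the algorithm to produce a point of $\mathbf{G}$; sufficiency by checking that each $\mathbf{g}\in\mathbf{G}$ yields $\mathfrak{H}(\mathbf{g})\in X(\mathcal{H})$ and invoking Theorem~\ref{t2_2}; and surjectivity onto the solution set via the uniqueness in Theorem~\ref{t2_2}. Your write-up is in fact more explicit than the paper's in two places the paper leaves implicit: the verification that the inequalities cutting out $\widehat S_r$ (the $\ell^2$ bound and $d_r\le M_r$) and the nonnegativity of the $e_r$-coefficient from~(\ref{f3_24_3}) are automatically satisfied for a genuine solution, and the passage from the inner-product relations~(\ref{f2_12})--(\ref{f2_15}) checked by the algorithm to the vector identities~(\ref{f2_16})--(\ref{f2_19}) required in condition~3) of Theorem~\ref{t3_2}, using that the difference lies in $\mathop{\rm span}\nolimits\{h_{a,b,c;d,e,f}\}$ and is orthogonal to it.
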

\begin{proof}
The sufficiency of the conditions in the statement of the Theorem for the solvability of
the two-dimensional moment problem was shown before the statement of the Theorem.
Let us show that these conditions are necessary.

\noindent
Let $\mu$ be a solution of the two-dimensional moment problem. By Theorem~\ref{t3_2}
the set $X(\mathcal{H})$ is not empty. Choose an arbitrary
$\widehat{\mathfrak{H}} = \{ \widehat h_{m,k,l;n,r,t} \}_{(m,k,l;n,r,t)\in\Omega} \in X(\mathcal{H})$.
By conditions~3),4) we see that
conditions (\ref{f2_12})-(\ref{f2_15}) (with $\widehat h$ instead of $x$) and (\ref{f3_26})-(\ref{f3_31})
are satisfied. In  particular, they are satisfied for $\widehat h_{m,0,0;n,0,0} = h_{m,n}$, $m,n\in \mathbb{Z}_+$.

\noindent
Comparing condition~2) with Steps 1 and $r$ for $r\geq 2$, we see that
\begin{equation}
\label{f3_46}
(\widehat h_{w(1)},...,\widehat h_{w(r)} ) \in G_r,\qquad r\in \mathbb{N}.
\end{equation}
Therefore elements
\begin{equation}
\label{f3_47}
(\widehat h_{w(1)},...,\widehat h_{w(r)}, g_{r+1}, g_{r+2},... ) \in \mathbf{G}_r,\qquad r\in \mathbb{N},
\end{equation}
where $g_j\in \mathcal{H}:\ \| g_j \|_{\mathcal{H}}\leq \sqrt{M_j}$, for $j > r$ are arbitrary.
Thus, the element
\begin{equation}
\label{f3_48}
\widehat{\mathbf{h}}:=(\widehat h_{w(1)}, \widehat h_{w(2)}, \widehat h_{w(3)},... ) \in
\mathbf{G}_r,\qquad r\in \mathbb{N}.
\end{equation}
and
\begin{equation}
\label{f3_49}
\widehat{\mathbf{h}}\in \bigcap_{r\in \mathbb{N}} \mathbf{G}_r = \mathbf{G}.
\end{equation}
Therefore $\mathbf{G} \not= \emptyset$.

If the conditions of the Theorem are satisfied then
to each $\mathbf{g} = (g_1,g_2,...)\in \mathbf{G}$, we put into correspondence a sequence
$\mathfrak{H} = \mathfrak{H}(\mathbf{g}) = \{ h_{m,k,l;n,r,t} \}_{(m,k,l;n,r,t)\in\Omega}$ such
that~(\ref{f3_32}) and~(\ref{f3_45_1}) hold.
Then $\mathfrak{H}\in X(\mathcal{H})$, as it was shown before the statement of the Theorem.
The sequence $\mathfrak{H}$ generates a solution of the extended two-dimensional moment problem and
of the two-dimensional moment problem, see considerations after the proof of Theorem~\ref{t3_2}.

\noindent
It remains to show that all solutions of the two-dimensional moment problem can be obtained
in this way.
Since elements of $X(\mathcal{H})$ generate all solutions of the two-dimensional moment problem
(see considerations after the proof of Theorem~\ref{t3_2}), it remains to prove that
\begin{equation}
\label{f3_50}
\{ \mathfrak{H}(\mathbf{g}):\ \mathbf{g}\in \mathbf{G} \} = X(\mathcal{H}).
\end{equation}
Denote the set on the left-hand side by $X_1$. It was shown that $X_1\subseteq X(\mathcal{H})$.
On the other hand, choose an arbitrary
$\widetilde{\mathfrak{H}} = \{ \widetilde h_{m,k,l;n,r,t} \}_{(m,k,l;n,r,t)\in\Omega} \in X(\mathcal{H})$.
Repeating the construction at the beginning of this proof we obtain that
\begin{equation}
\label{f3_51}
\widetilde{\mathbf{h}} :=
(\widetilde h_{w(1)}, \widetilde h_{w(2)}, \widetilde h_{w(3)},... ) \in \mathbf{G}.
\end{equation}
Observe that
\begin{equation}
\label{f3_52}
\mathfrak{H}(\widetilde{\mathbf{h}}) =\widetilde{\mathfrak{H}}.
\end{equation}
Therefore $X(\mathcal{H})\subseteq X_1$ and relation~(\ref{f3_50}) holds.
\end{proof}

\begin{remark}
The truncated two-dimensional moment problem can be considered in a
similar manner. Moreover, the set of indices of the known elements $h_{m,n}$ will be finite in this case
and therefore equations in the $r$-th step of the algorithm will form finite systems
of linear equations. Thus, the $r$-th step could be easily performed using computer.
\end{remark}

\begin{remark}
Consider the following system of $r$ linear equations:
\begin{equation}
\label{f3_53}
A^{1}
\left(
\begin{array}{ccc} x_1\\
x_2\\
\vdots \end{array}
\right)
=
\left(
\begin{array}{cccc} f_1^1\\
f_2^1\\
\vdots\\
f_r^1 \end{array}
\right),
\end{equation}
where $A^{1} = (a^1_{i,j})_{1\leq i\leq r; j\in \mathbb{N}}$ is a given complex numerical matrix,
$f_i^1$, $1\leq i\leq r$, are given complex numbers, and $x_j$, $j\in \mathbb{N}$, are
unknown complex numbers; $r\in \mathbb{N}$.

The Gauss algorithm allows to solve this system explicitly. Let us briefly describe this.

\noindent
{\it Step 1. } If $A^1 = 0$ then the algorithm stops. Conditions of solvability and the set of solutions
are obvious in this case.

\noindent
If $A^1 \not= 0$, let $m_1$-th column of $A^1$ be the first non-zero column of $A^1$.
Interchanging equations we set the non-zero element of this column in the first row
and divide this equation by this element.
Then we exclude $x_{m_1}$ from the  rest of equations. We get the following system:
\begin{equation}
\label{f3_54}
x_{m_1} + a^2_{1,m_1+1} x_{m_1+1} + a^2_{1,m_1+2} x_{m_1+2} + ... = f_1^2,
\end{equation}
\begin{equation}
\label{f3_55}
A^{2}
\left(
\begin{array}{ccc} x_{m_1+1}\\
x_{m_1+2}\\
\vdots \end{array}
\right)
=
\left(
\begin{array}{cccc} f_2^2\\
f_3^2\\
\vdots\\
f_r^2 \end{array}
\right),
\end{equation}
where $A^{2}$ is a given complex numerical matrix with $r-1$ rows,
$f_i^2$, $1\leq i\leq r$, and $a^2_{1,j}$, $j\geq m_1+1$, are
given complex numbers.

Then we repeat the same construction for the linear system~(\ref{f3_55}). After a finite number of
steps the algorithm stops.
Then we exclude $x_{m_t}, x_{m_{t-1}},...,x_{m_1}$ from the previous equations ($t\leq r$).

\noindent
Thus, the numbers $x_j$: $j\not= m_1,m_2,...,m_t$ can be chosen arbitrary such that the corresponding
series in~(\ref{f3_53}) converge, and $x_{m_1},x_{m_2},...,x_{m_t}$ are defined uniquely.
If $t<r$, we additionally have the solvability conditions which follow from~(\ref{f3_55}) in the last step.

Observe that if  we have an infinite number of equations in~(\ref{f3_53}), we can choose an increasing number
of equations and then construct the intersection of the solution sets.
\end{remark}

\noindent
{\bf A modified algorithm.}
Notice that in~Theorem~\ref{t3_3}
the correspondence between the parameters set $\mathbf{G}$ and solutions of the two-dimensional
moment problem is not necessarily bijective.
The algorithm may be modified to make this correspondence one-to-one. The following modified algorithm is more
complicated. If we only need to check the solvability or the bijection is not necessary for our purposes,
we can use the original algorithm.

\noindent
First, condition~2) of Theorem~\ref{t3_2} may be replaced by the following more precise condition:

\noindent
2) Set $\Lambda := \{ r\in \mathbb{N}:\  (h_{w(r)},e_r) > 0 \}$. Then
\begin{equation}
\label{f3_55_1}
h_{w(r)} \in \mathcal{H}_0 \oplus \left(
\bigoplus_{j\in \Lambda:\ 1\leq j\leq r} \mathcal{H}_j
\right), \mbox{and } (h_{w(r)},e_r) \geq 0,\ r\in \mathbb{N}.
\end{equation}

\noindent
The necessity of this condition was shown before Theorem~\ref{t3_2}, while condition~2) was not used
in the proof of the sufficiency of Theorem~\ref{t3_2}.

\noindent
As before, we denote a set of sequences
$\{ h_{m,k,l;n,r,t} \}_{(m,k,l;n,r,t)\in\Omega}$, in $\mathcal{H}$ satisfying
conditions 1)-4) of Theorem~\ref{t3_2} by $X=X(\mathcal{H})$. Observe that the modified $X$ is
a subset of the original one.
The same arguments show that the new $X(\mathcal{H})$ generates all solutions of the two-dimensional
moment problem, as well.

\noindent
Step~0 and Step~1 of the algorithm will be the same as before.

\noindent
We set
\begin{equation}
\label{f3_56}
\mathcal{H}^k := \mathcal{H}_0 \oplus \left(
\bigoplus_{1\leq j\leq k} \mathcal{H}_j
\right),\quad \mathcal{H}^k_+ := \{ h\in \mathcal{H}^k:\ (h,e_k) > 0 \},\qquad k\in \mathbb{N}.
\end{equation}
In the $r$-th step we shall proceed in the following way ($r\geq 2$).

\noindent
Choose an arbitrary $(h_{w(1)},h_{w(2)},...,h_{w(r-1)})\in G_{r-1}$.
Observe that by the construction in the $(r-1)$-th step we have
\begin{equation}
\label{f3_57}
h_{w(j)} \in \mathcal{H}^{k-1}  \mbox{ or } h_{w(j)} \in \mathcal{H}^k_+,\qquad 1\leq k\leq r-1.
\end{equation}
Set
\begin{equation}
\label{f3_58}
S_r := \{ \vec s = (s_1,s_2,...,s_{r-1}):\ s_j=1 \mbox{ or } s_j=0,\ 1\leq j\leq r-1 \};
\end{equation}
and
$$ \mathbf{H}^r_{\vec s} := \{ (h_1,h_2,...,h_{r-1}):\ h_j\in \mathcal{H}^{j-1} \mbox{ if } s_j=0;\
h_j\in \mathcal{H}^{j}_+ \mbox{ if } s_j=1;\ 1\leq j\leq r-1 \}, $$
\begin{equation}
\label{f3_59}
\vec{s}\in S^r.
\end{equation}
Observe that $S_r$ is a finite set of $2^{r-1}$ binary numbers. By~(\ref{f3_57}) we obtain that
\begin{equation}
\label{f3_60}
(h_{w(1)},h_{w(2)},...,h_{w(r-1)})\in G_{r-1}\cap \mathbf{H}^r_{\vec s},\quad \mbox{for some }
\vec{s}\in S^r.
\end{equation}
Set
\begin{equation}
\label{f3_61}
\Gamma_{r-1,\vec{s}} := G_{r-1}\cap \mathbf{H}^r_{\vec s},\quad \vec{s}\in S^r.
\end{equation}
Notice that
\begin{equation}
\label{f3_62}
\Gamma_{r-1,\vec{s}_1} \cap \Gamma_{r-1,\vec{s}_2} = \emptyset,\quad \vec{s}_1,\vec{s}_2\in S^r,
\end{equation}
and
\begin{equation}
\label{f3_63}
G_{r-1} = \bigcup_{\vec{s}\in S^r} \Gamma_{r-1,\vec{s}}.
\end{equation}
Choose an arbitrary $\vec{s}\in S^r$.
We seek for $h_{w(r)}$ in the following form:
\begin{equation}
\label{f3_64}
h_{w(r)} = \sum_{n=1}^\infty \alpha_{r;n} g_n + \sum_{1\leq j\leq r-1:\ s_j = 1} \beta_{r;j} e_j +
\beta_{r;r} e_r,
\end{equation}
with some complex coefficients $\alpha_{r;n},\beta_{r;j}$: $\beta_{r;r}\geq 0$.

\noindent
Conditions (\ref{f2_12})-(\ref{f2_15}) (with $h$ instead of $x$) and (\ref{f3_26})-(\ref{f3_31})
which include $h_{w(r)}$ and the already constructed $h_{m,k,l;n,r,t}$
are equivalent to a set $L_r(\vec{s})$ of linear equations with respect to $\alpha_{r;n}$, $n\in \mathbb{N}$,
$\beta_{r;j}$, $1\leq j\leq r-1:\ s_j=1$,
and
$d_r = \| h_{w(r)} \|^2_{\mathcal{H}}$, and
{\it depending on parameters $( h_{w(1)},h_{w(2)},...,h_{w(r-1)} )\in \Gamma_{r-1,\vec{s}}$ }.

\noindent
Notice that these linear equations depend on $\beta_{r;j}$ only
by $d_r$.
Denote the set of solutions of these equations
by
$$ S_r(\vec{s})
= \left\{ ( \alpha_{r;n}, n\in \mathbb{N}; \beta_{r;j}, 1\leq j\leq r-1:\ s_j=1; d_r;
h_{w(1)},h_{w(2)},...,h_{w(r-1)}  ): \right. $$
$$ \left. ( h_{w(1)},h_{w(2)},...,h_{w(r-1)} )\in \Gamma_{r-1;\vec{s}}, \right. $$
\begin{equation}
\label{f3_65}
\left. \mbox{ and equations from $L_r(\vec{s})$ with parameters
$( h_{w(1)},h_{w(2)},...,h_{w(r-1)} )$, are satisfied} \right\}.
\end{equation}
Set
$$ \widehat S_r(\vec{s}) = \left\{ (\alpha_{r;n}, n\in N; \beta_{r;j}, 1\leq j\leq r-1:\ s_j=1; d_r;
h_{w(1)},h_{w(2)},...,h_{w(r-1)} )\in S_r(\vec{s}): \right. $$
\begin{equation}
\label{f3_66}
\left. \sum_{n=1}^\infty |\alpha_{r;n}|^2 + \sum_{1\leq j\leq r-1:\ s_j=1} |\beta_{r;j}|^2 \leq d_r,\
d_r\leq M_r \right\}.
\end{equation}
Finally, we set
$$ G_r(\vec{s}) = \left\{ \left(
h_{w(1)},h_{w(2)},...,h_{w(r-1)}, \right. \right. $$
$$ \left. \left.
\sum_{n=1}^\infty \alpha_{r;n} g_n + \sum_{1\leq j\leq r-1:\ s_j=1} \beta_{r;j} e_j +
\left( d_r - \sum_{n=1}^\infty |\alpha_{r;n}|^2 - \sum_{1\leq j\leq r-1:\ s_j=1} |\beta_{r;j}|^2
\right)^{\frac{1}{2}} e_r
\right): \right. $$
\begin{equation}
\label{f3_67}
\left. (\alpha_{r;n}, n\in \mathbb{N}; \beta_{r;j}, 1\leq j\leq r-1:\ s_j=1; d_r;
h_{w(1)},h_{w(2)},...,h_{w(r-1)} ) \in  \widehat S_r(\vec{s}) \right\}.
\end{equation}
The case $G_r(\vec{s}) = \emptyset$ is not excluded.

\noindent
We set
\begin{equation}
\label{f3_68}
G_r := \bigcup_{\vec{s}\in S^r} G_r(\vec{s}).
\end{equation}
The final step is the same as for the original algorithm. Thus, we obtain the set $\mathbf{G}$.

\noindent
To each $\mathbf{g} =(g_1,g_2,...)\in \mathbf{G}$, we put into correspondence a sequence
$\mathfrak{H} = \mathfrak{H}(\mathbf{g})
= \{ h_{m,k,l;n,r,t} \}_{(m,k,l;n,r,t)\in\Omega}$ such that~(\ref{f3_32}),(\ref{f3_44}) hold.
We state that $\mathfrak{H}(\mathbf{g})\in X(\mathcal{H})$.
Observe that the  set $G_r$ in~(\ref{f3_68}) is a subset of $G_r$ for the original algorithm.
Therefore the set $\mathbf{G}$ is a subset of $\mathbf{G}$ for the original algorithm.
Thus, $\mathfrak{H}(\mathbf{g})$ belongs to the old $X(\mathcal{H})$. To show that it
belongs to the modified $X(\mathcal{H})$ it remains to verify~(\ref{f3_55_1}).
Observe that $( h_{w(1)}, h_{w(2)},...,h_{w(r)})\in G_{r}(\vec{s})$, for some $\vec{s}\in S^r$.
The condition $(h_{w(r)},e_r)\geq 0$ follows from the construction of $h_{w(r)}$ in the $r$-th step.
Set
\begin{equation}
\label{f3_69}
\Lambda_0(r) := \left\{ \begin{array}{cc} \{ 1\leq j\leq r-1:\ s_j=1 \} \cup {r}, & \mbox{if }(h_{w(r)},e_r) > 0 \\
\{ 1\leq j\leq r-1:\ s_j=1 \}, & \mbox{if }(h_{w(r)},e_r) = 0\end{array}\right..
\end{equation}
By~(\ref{f3_64}) we get
\begin{equation}
\label{f3_70}
h_{w(r)} \in \mathcal{H}_0 \oplus \left(
\bigoplus_{j\in \Lambda_0(r)} \mathcal{H}_j
\right).
\end{equation}
Thus, it remains to verify that $\Lambda_0(r) = \{j\in \Lambda:\ 1\leq j\leq r \} =: \Lambda(r)$.
But for $1\leq j\leq r-1$, conditions
$s_j=1$ and $(h_{w(j)},e_j)>0$ are equivalent.
Consequently, we obtain $\mathfrak{H}(\mathbf{g}) \in X(\mathcal{H})$.

Theorem~\ref{t3_3} remains true if we replace words "according to the algorithm" by the
words "according to the modified algorithm", and add the following sentence:
"The correspondence between elements of $\mathbf{G}$ and solutions of the two-dimensional
moment problem is bijective".
Let us check this last assertion (and the rest of the proof is similar).

\noindent
The correspondence between $\mathbf{G}$ and $X(\mathcal{H})$ is obviously bijective.
Let
$\mathfrak{H}_j = \{ h^j_{m,k,l;n,r,t} \}_{(m,k,l;n,r,t)\in\Omega} \in X(\mathcal{H})$,
$j=1,2$, be different: $\mathfrak{H}_1\not = \mathfrak{H}_2$. They
produce solutions $\mu_1$ and $\mu_2$ of the two-dimensional moment problem, respectively.
Suppose that $\mu_1 = \mu_2 = \mu$. Recall that $\mu_j$ is constructed as a solution
of the corresponding extended two-dimensional moment problem with moments
$u_{m,k,l;n,r,t}^j = \varphi_j(m,k,l;n,r,t)$, $j=1,2$ (see the proof of Theorem~\ref{t3_2}).
Here $\varphi_j(m,k,l;n,r,t)$ is from Condition~4) for $\mathfrak{H}_j$, $j=1,2$.
Therefore
$\varphi_1(m,k,l;n,r,t) = \varphi_2(m,k,l;n,r,t)$.
By condition~4) of Theorem~\ref{t3_2} this means that
\begin{equation}
\label{f3_71}
(h^1_{m,k,l;n,r,t},h^1_{m',k',l';n',r',t'})_{\mathcal{H}} =
(h^2_{m,k,l;n,r,t},h^2_{m',k',l';n',r',t'})_{\mathcal{H}},
\end{equation}
for all $(m,k,l;n,r,t),(m',k',l';n',r',t')\in \Omega$.

\noindent
Choose the minimal $r$, $r\in \mathbb{N}$, such that
\begin{equation}
\label{f3_72}
h^1_{w(r)} \not= h^2_{w(r)}.
\end{equation}
By~(\ref{f3_32}),(\ref{f3_71}) we obtain
\begin{equation}
\label{f3_73}
P^{\mathcal{H}}_{\mathcal{H}_0} h^1_{w(r)} = P^{\mathcal{H}}_{\mathcal{H}_0} h^2_{w(r)} =: h_0.
\end{equation}
By condition~(\ref{f3_55_1}) we may write
$$ h^1_{w(r)} = h_0 + \sum_{1\leq j\leq r-1:\ (h^1_{w(j)},e_j) > 0} \gamma_{r;j}^1 e_j +
\gamma_{r;r}^1 e_r,\quad  \gamma_{r;j}^1\in \mathbb{C},\ \gamma_{r;r}^1\geq 0; $$
\begin{equation}
\label{f3_74}
h^2_{w(r)} = h_0 + \sum_{1\leq j\leq r-1:\ (h^2_{w(j)},e_j) > 0} \gamma_{r;j}^2 e_j +
\gamma_{r;r}^2 e_r,\quad  \gamma_{r;j}^2\in \mathbb{C},\ \gamma_{r;r}^2\geq 0.
\end{equation}
Since $h^1_{w(j)} = h^2_{w(j)} =: h_{w(j)}$, $1\leq j\leq r-1$, we get
\begin{equation}
\label{f3_75}
\{j:\ 1\leq j\leq r-1,\ (h^1_{w(j)},e_j) > 0 \} = \{j:\ 1\leq j\leq r-1,\ (h^2_{w(j)},e_j) > 0 \} =:\widehat\Lambda.
\end{equation}
Therefore
\begin{equation}
\label{f3_76}
h^a_{w(r)} = h_0 + \sum_{j\in \widehat\Lambda} \gamma_{r;j}^a e_j +
\gamma_{r;r}^a e_r,\quad  \gamma_{r;j}^a\in \mathbb{C},\ \gamma_{r;r}^a\geq 0,\ a=1,2.
\end{equation}
Suppose that there exists $j\in\widehat\Lambda$ such that $\gamma_{r;j}^1\not= \gamma_{r;j}^2$.
Let $j_0$ be the minimal such index $j$.
Since $j_0\in\widehat\Lambda$, we get
$$ \zeta_{j_0} := (h_{w(j_0)}, e_{j_0}) = (h^a_{w(j_0)}, e_{j_0}) > 0,\qquad a=1,2, $$
and
\begin{equation}
\label{f3_77}
h_{w(j_0)} = \zeta_{j_0} e_{j_0} + u_{j_0-1},\quad u_{j_0-1}\in \mathcal{H}^{j_0-1}.
\end{equation}
Then
\begin{equation}
\label{f3_78}
e_{j_0} = \frac{1}{\zeta_{j_0}} ( h_{w(j_0)} - u_{j_0-1} );
\end{equation}
and
$$ \gamma_{r;j_0}^a = (h^a_{w(r)}, e_{j_0}) = \frac{1}{ \overline{\zeta_{j_0}} } (h^a_{w(r)},
h_{w(j_0)} - u_{j_0-1}) $$
$$ = \frac{1}{ \overline{\zeta_{j_0}} } (h^a_{w(r)},
h_{w(j_0)}) - \frac{1}{ \overline{\zeta_{j_0}} } (h^a_{w(r)},u_{j_0-1}) $$
\begin{equation}
\label{f3_79}
= \frac{1}{ \overline{\zeta_{j_0}} } (h^a_{w(r)},
h_{w(j_0)}) - \frac{1}{ \overline{\zeta_{j_0}} } ( h_0 + \sum_{j\in \widehat\Lambda: j<j_0} \gamma_{r;j}^a e_j,
u_{j_0-1}).
\end{equation}
By~(\ref{f3_71}) and our assumption about $j_0$ we obtain $\gamma_{r;j_0}^1 = \gamma_{r;j_0}^2$.
This contradiction means that
$\gamma_{r;j}^1 = \gamma_{r;j}^2$, $\forall j\in\widehat\Lambda$. Therefore
\begin{equation}
\label{f3_80}
h^a_{w(r)} = \widehat h + \gamma_{r;r}^a e_r,\quad  \widehat h\in \mathcal{H}^{r-1},\
\gamma_{r;r}^a\geq 0,\ a=1,2.
\end{equation}
\noindent
Observe that
$$ \| h_{w(r)}^a \|^2 = \| \widehat h \|^2 + | \gamma_{r;r}^a |^2,\quad a=1,2. $$
By~(\ref{f3_71}) we conclude that $\gamma_{r;r}^1 = \gamma_{r;r}^2$.
Therefore $h_{w(r)}^1 = h_{w(r)}^2$.
We obtained a contradiction with~(\ref{f3_72}).
The proof of the last assertion for the modified Theorem~\ref{t3_3} is complete.

\section{On a connection with the complex moment problem.}
In this section we shall analyze the complex moment problem:
to find a non-negative Borel measure $\sigma$ in the complex plane such that
\begin{equation}
\label{f4_1}
\int_\mathbb{C} z^m \overline{z}^n d\sigma = a_{m,n},\qquad m,n\in \mathbb{Z}_+,
\end{equation}
where $\{ a_{m,n} \}_{m,n\in \mathbb{Z}_+}$ is a prescribed sequence of complex numbers.

Recall the canonical identification
of $\mathbb{C}$ with $\mathbb{R}^2$:
\begin{equation}
\label{f4_2}
z = x_1 + x_2 i,\ x_1 = \mathop{\rm Re}\nolimits z,\ x_2 = \mathop{\rm Im}\nolimits z,\quad
z\in \mathbb{C},\ (x_1,x_2)\in \mathbb{R}^2.
\end{equation}
Let $\sigma$ be a solution of the complex moment problem~(\ref{f4_1}).
The measure $\sigma$, viewed as a measure in $\mathbb{R}^2$, we shall denote by $\mu_\sigma$. Then
$$ s_{m,n} := \int_{\mathbb{R}^2} x_1^m x_2^n d\mu_\sigma = \int_\mathbb{C} \left( \frac{z+\overline{z}}{2} \right)^m
\left( \frac{z-\overline{z}}{2i} \right)^n d\sigma $$
$$ = \frac{1}{2^m (2i)^{n}} \sum_{k=0}^m \sum_{j=0}^n
C^m_k C^n_j (-1)^{n-j}\int_\mathbb{C} z^{k+j} \overline{z}^{m-k+n-j} d\sigma $$
\begin{equation}
\label{f4_3}
= \frac{1}{2^m (2i)^{n}} \sum_{k=0}^m \sum_{j=0}^n (-1)^{n-j} C^m_k C^n_j a_{k+j,m-k+n-j},
\end{equation}
where $C^n_k = \frac{n!}{k!(n-k)!}$.
Then
$$ a_{m,n} = \int_\mathbb{C} z^m \overline{z}^n d\sigma = \int_{\mathbb{R}^2} (x_1+ix_2)^m
(x_1-ix_2)^n d\mu_\sigma  $$
$$ = \sum_{r=0}^m \sum_{l=0}^n C^m_r C^n_l (-1)^{n-l} \int_{\mathbb{R}^2} x_1^{r+l} (ix_2)^{m-r+n-l} d\mu_\sigma $$
$$ = \sum_{r=0}^m \sum_{l=0}^n C^m_r C^n_l (-1)^{n-l} i^{m-r+n-l}
s_{r+l,m-r+n-l}; $$
and therefore
\begin{equation}
\label{f4_4}
a_{m,n} = \sum_{r=0}^m \sum_{l=0}^n C^m_r C^n_l (-1)^{n-l} i^{m-r+n-l}
s_{r+l,m-r+n-l},\quad m,n\in \mathbb{Z}_+,
\end{equation}
where
\begin{equation}
\label{f4_5}
s_{m,n}
= \frac{1}{2^m (2i)^{n}} \sum_{k=0}^m \sum_{j=0}^n (-1)^{n-j} C^m_k C^n_j a_{k+j,m-k+n-j},\quad m,n\in \mathbb{Z}_+.
\end{equation}
Since $\mu_\sigma$ is a solution of the two-dimensional moment problem, then
conditions of Theorem~\ref{t3_3} hold.
\begin{theorem}
\label{t4_1}
Let the complex moment problem~(\ref{f4_1}) be given.
This problem has a solution if an only if conditions of Theorem~\ref{t3_3} and~(\ref{f4_4})
with $s_{m,n}$ defined by~(\ref{f4_5})  hold.
\end{theorem}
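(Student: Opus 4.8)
The plan is to carry everything across the canonical identification~(\ref{f4_2}) of $\mathbb{C}$ with $\mathbb{R}^2$. Under this identification a non-negative Borel measure $\sigma$ on $\mathbb{C}$ corresponds to a non-negative Borel measure $\mu_\sigma$ on $\mathbb{R}^2$, and this correspondence is a bijection between the two classes of measures; moreover, finiteness of the moments $\int_{\mathbb{C}} z^m\overline{z}^n\,d\sigma$ forces finiteness of all the moments $\int_{\mathbb{R}^2} x_1^a x_2^b\,d\mu_\sigma$ (by domination, since $|x_1|,|x_2|\le|z|$), and conversely. Hence the whole statement reduces to the binomial bookkeeping already carried out in~(\ref{f4_3})--(\ref{f4_5}), combined with an application of Theorem~\ref{t3_3}.

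For necessity I would argue as follows. Assume $\sigma$ solves~(\ref{f4_1}); put $\mu:=\mu_\sigma$ and $s_{m,n}:=\int_{\mathbb{R}^2}x_1^m x_2^n\,d\mu$. Expanding $x_1^m x_2^n=\left(\frac{z+\overline{z}}{2}\right)^m\left(\frac{z-\overline{z}}{2i}\right)^n$ by the binomial theorem and integrating against $\sigma$ shows, exactly as in~(\ref{f4_3}), that $s_{m,n}$ is given by formula~(\ref{f4_5}). Since $\mu$ is a solution of the two-dimensional moment problem~(\ref{f1_1}) with these moments, condition~(\ref{f3_2}) holds, and by the necessity part of Theorem~\ref{t3_3} all the conditions of that theorem are satisfied. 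Finally, expanding $z^m\overline{z}^n=(x_1+ix_2)^m(x_1-ix_2)^n$ and integrating against $\mu$ gives~(\ref{f4_4}).

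For sufficiency I would reverse this. Suppose the conditions of Theorem~\ref{t3_3} hold for the sequence $\{s_{m,n}\}_{m,n\in\mathbb{Z}_+}$ defined by~(\ref{f4_5}) (so that, in particular,~(\ref{f3_2}) holds and Theorem~\ref{t3_3} is applicable), together with~(\ref{f4_4}). By Theorem~\ref{t3_3} there is a non-negative Borel measure $\mu$ on $\mathbb{R}^2$ with $\int_{\mathbb{R}^2}x_1^m x_2^n\,d\mu=s_{m,n}$ for all $m,n\in\mathbb{Z}_+$. Let $\sigma$ be $\mu$ regarded as a measure on $\mathbb{C}$ via~(\ref{f4_2}). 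Then, expanding $z^m\overline{z}^n=(x_1+ix_2)^m(x_1-ix_2)^n$, integrating against $\mu$, and using~(\ref{f4_4}),
$$ \int_{\mathbb{C}} z^m\overline{z}^n\,d\sigma = \sum_{r=0}^m\sum_{l=0}^n C^m_r C^n_l (-1)^{n-l} i^{m-r+n-l} s_{r+l,m-r+n-l} = a_{m,n}, $$
so $\sigma$ solves the complex moment problem~(\ref{f4_1}).

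There is no genuine obstacle here; the argument is a transfer principle. The only points that need care are: reading ``the conditions of Theorem~\ref{t3_3}'' as including the positivity hypothesis~(\ref{f3_2}) on the sequence $\{s_{m,n}\}$ (needed merely in order to invoke Theorem~\ref{t3_3} in the sufficiency direction), and noting that~(\ref{f4_4}) and~(\ref{f4_5}) are mutually inverse relations — indeed, substituting $x_1=(z+\overline{z})/2$, $x_2=(z-\overline{z})/(2i)$ turns $x_1+ix_2$ into $z$ and $x_1-ix_2$ into $\overline{z}$ — so that~(\ref{f4_4}) in fact holds automatically once $\{s_{m,n}\}$ is defined by~(\ref{f4_5}); this is the content of the computation preceding the statement. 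No operator-theoretic input beyond Theorem~\ref{t3_3} is required.
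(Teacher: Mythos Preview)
Your proof is correct and follows essentially the same route as the paper: the necessity is the computation~(\ref{f4_3})--(\ref{f4_5}) carried out before the statement, and the sufficiency is obtained by applying Theorem~\ref{t3_3} to produce $\mu$, viewing it as a measure on $\mathbb{C}$, and expanding $(x_1+ix_2)^m(x_1-ix_2)^n$ to recover $a_{m,n}$ via~(\ref{f4_4}). Your additional remark that~(\ref{f4_4}) holds automatically once $s_{m,n}$ is defined by~(\ref{f4_5}) (since the two substitutions are mutually inverse at the polynomial level) is correct and worth noting, as it shows that this hypothesis is in fact redundant; the paper does not make this observation explicit.
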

\begin{proof}
It remains to prove the sufficiency. Suppose that for the complex moment problem~(\ref{f4_1})
conditions of Theorem~\ref{t3_3} and~(\ref{f4_4}) with $s_{m,n}$ defined by~(\ref{f4_5}) hold.
By Theorem~\ref{t3_3} we obtain that
there exists a solution $\mu$ of the two-dimensional moment problem with moments $s_{m,n}$.

The measure $\mu$, viewed as a measure in $\mathbb{C}$, we shall denote by $\sigma_\mu$. Then
$$ \int_\mathbb{C} z^m \overline{z}^n d\sigma_\mu = \int_{\mathbb{R}^2} (x_1+ix_2)^m (x_1-ix_2)^n d\mu  $$
$$ = \sum_{r=0}^m \sum_{l=0}^n C^m_r C^n_l (-1)^{n-l} \int_{\mathbb{R}^2} x_1^{r+l} (ix_2)^{m-r+n-l} d\mu $$
$$ = \sum_{r=0}^m \sum_{l=0}^n C^m_r C^n_l (-1)^{n-l} i^{m-r+n-l}
s_{r+l,m-r+n-l} = a_{m,n}, $$
where the last equality follows from~(\ref{f4_4}).
\end{proof}

\begin{theorem}
\label{t4_2}
Let the complex moment problem~(\ref{f4_1}) be given and
conditions of Theorem~\ref{t3_3} and~(\ref{f4_4})
with $s_{m,n}$ defined by~(\ref{f4_5})  hold.
Let $\Psi$ be a set of all solutions of the complex moment problem~(\ref{f4_1})
and $\Phi$ be a set of all solutions of the two-dimensional moment problem~(\ref{f1_1})
with $s_{m,n}$ defined by~(\ref{f4_5}).
Then
\begin{equation}
\label{f4_6}
\Psi = \{ \sigma_\mu:\ \mu\in \Phi\}.
\end{equation}
Therefore all solutions of the moment problem~(\ref{f4_1}) are described
by Theorem~\ref{t3_3}.
\end{theorem}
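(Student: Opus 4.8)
The plan is to exploit the fact that the canonical identification $z = x_1 + i x_2$ is a homeomorphism of $\mathbb{C}$ onto $\mathbb{R}^2$, so that the assignments $\mu \mapsto \sigma_\mu$ and $\sigma \mapsto \mu_\sigma$ are mutually inverse bijections between the non-negative Borel measures on $\mathbb{R}^2$ and those on $\mathbb{C}$. In particular $\sigma_{\mu_\sigma} = \sigma$ for every non-negative Borel measure $\sigma$ on $\mathbb{C}$, and $\mu_{\sigma_\mu} = \mu$ for every such $\mu$ on $\mathbb{R}^2$. I would record this involutivity at the outset, since it is the only point that is not pure bookkeeping.

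First I would prove the inclusion $\{\sigma_\mu : \mu \in \Phi\} \subseteq \Psi$. Let $\mu \in \Phi$, that is, $\mu$ is a solution of the two-dimensional moment problem~(\ref{f1_1}) with the moments $s_{m,n}$ given by~(\ref{f4_5}). Then the same computation as in the proof of Theorem~\ref{t4_1} (expanding $(x_1 + i x_2)^m (x_1 - i x_2)^n$ by the binomial formula and invoking~(\ref{f4_4})) yields $\int_{\mathbb{C}} z^m \overline{z}^n \, d\sigma_\mu = a_{m,n}$ for all $m,n\in\mathbb{Z}_+$; hence $\sigma_\mu\in\Psi$.

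For the reverse inclusion $\Psi \subseteq \{\sigma_\mu : \mu \in \Phi\}$, take $\sigma\in\Psi$. The computation~(\ref{f4_3}) shows that the measure $\mu_\sigma$ on $\mathbb{R}^2$ satisfies $\int_{\mathbb{R}^2} x_1^m x_2^n \, d\mu_\sigma = s_{m,n}$, with $s_{m,n}$ exactly as in~(\ref{f4_5}), so $\mu_\sigma$ is a solution of the two-dimensional moment problem~(\ref{f1_1}) with those moments, i.e. $\mu_\sigma\in\Phi$. By the involutive property of the identification, $\sigma = \sigma_{\mu_\sigma}$, which exhibits $\sigma$ as a member of $\{\sigma_\mu : \mu\in\Phi\}$. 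Combining the two inclusions gives~(\ref{f4_6}).

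Finally, since by Theorem~\ref{t3_3} every element of $\Phi$ is obtained from a parameter $\mathbf{g}\in\mathbf{G}$ through the sequence $\mathfrak{H}(\mathbf{g})$ and the associated extended two-dimensional moment problem, applying the bijection $\mu\mapsto\sigma_\mu$ to these measures produces precisely all elements of $\Psi$ by~(\ref{f4_6}); this establishes the concluding assertion of the theorem. I do not expect any real analytic obstacle here: the entire content is the algebraic link between the two moment arrays, already carried out in~(\ref{f4_3})--(\ref{f4_5}), together with the elementary fact that pushing a measure forward along a homeomorphism and back is the identity. The step deserving the most care is therefore simply the explicit statement and verification that $\mu\mapsto\sigma_\mu$ and $\sigma\mapsto\mu_\sigma$ are inverse bijections on the respective cones of measures.
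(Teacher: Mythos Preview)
Your proposal is correct and is exactly the straightforward argument the paper has in mind; indeed, the paper's own proof consists of the single sentence ``The proof is straightforward,'' and your write-up simply fills in the routine details (the two inclusions via the computations~(\ref{f4_3}) and the one in the proof of Theorem~\ref{t4_1}), together with the observation that $\mu\mapsto\sigma_\mu$ and $\sigma\mapsto\mu_\sigma$ are mutually inverse.
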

\begin{proof}
The proof is straightforward.
\end{proof}
Of course, this Theorem holds for the modified version of Theorem~\ref{t3_3}, as well.

\bibliographystyle{amsplain}

\begin{thebibliography}{99}

\bibitem{cit_20000_Akh}
N.I. Akhiezer, \textit{Classical Moment Problem}, Fizmatlit., Moscow, 1961. (in Russian).

\bibitem{cit_80000_AG}
N.I. Akhiezer, I.M. Glazman, \textit{Theory of Linear Operators in a Hilbert Space}, Gos. izdat. teh.-teor. lit.,
Moscow, Leningrad, 1950. (in Russian).

\bibitem{cit_30000_B}
Ju.M. Berezanskii, \textit{Expansions in Eigenfunctions of Selfadjoint Operators}, Amer. Math. Soc.,
Providence, RI, 1968. (Russian edition: Naukova Dumka, Kiev, 1965).

\bibitem{cit_50000_B}
Yu.M. Berezansky, \textit{Spectral theory of the infinite block Jacobi type normal matrices,
orthogonal polynomials on the complex domain, and the complex moment problem}, Operator Theory: Advances
and Applications \textbf{191} (2009), 37--50.

\bibitem{cit_40000_F}
B. Fuglede, \textit{The multidimensional moment problem}, Expo. Math. \textbf{1} (1983), no.~4, 47--65.

\bibitem{cit_PuVa}
M. Putinar, F.-H. Vasilescu, \textit{Solving moment problems by dimensional extension},
Ann. Math. \textbf{149} (1999), 1087--1107.

\bibitem{cit_10000_ST}
J.A. Shohat, J.D. Tamarkin, \textit{The Problem of Moments}, Amer. Math. Soc., New York City, 1943.

\bibitem{cit_60000_S}
J. Stochel, F.H. Szafraniec, \textit{The complex moment problem and subnormality: a
 polar decomposition approach}, J. Funct. Anal. \textbf{159} (1998), 432--491.

\bibitem{cit_70000_Z}
S.M. Zagorodnyuk, \textit{Positive definite kernels satisfying difference equations},
Methods Funct. Anal. Topol. \textbf{16} (2010), 83--100.

\end{thebibliography}

\end{document}